\newtheorem{theorem}{Theorem}[section]
\newtheorem{thm}{Theorem}[section]
\newtheorem{lemma}[theorem]{Lemma}
\newtheorem{prop}[theorem]{Proposition}
\newtheorem{proposition}[theorem]{Proposition}
\newtheorem{cor}[theorem]{Corollary}
\newtheorem{conjecture}[theorem]{Conjecture}
\theoremstyle{definition}
\newtheorem{definition}[theorem]{Definition}
\newtheorem{example}[theorem]{Example}
\newtheorem{remark}[theorem]{Remark}
\definecolor{cKlaus}{rgb}{0.1,0.45,0.03}
\definecolor{loesung}{rgb}{0.6,0.10,0.33}
\definecolor{cKlausOK}{rgb}{0.3,0.40,0.33}
\definecolor{intOrange}{rgb}{1.0,.310,.0}
\definecolor{cNathan}{rgb}{0.3,0.40,0.53}
\definecolor{intOrange}{rgb}{1.0,.310,.0} 
\newcommand{\cP}{\mathcal{P}}
\newcommand{\cE}{\mathcal{E}}
\newcommand{\bK}{\mathbb{K}}
\newcommand{\ZZ}{\mathbb{Z}}
\newcommand{\bN}{\mathbb{N}}
\newcommand{\NN}{\mathbb{N}}
\newcommand{\KK}{\mathbb{K}}
\newcommand{\bfu}{\mathbf u}
\newcommand{\bfv}{\mathbf v}
\newcommand{\bfw}{\mathbf w}
\newcommand{\PP}{\mathbb P}
\newcommand{\Z}{\mathbb Z}
\newcommand{\Q}{\mathbb Q}
\newcommand{\QQ}{\mathbb Q}
\newcommand{\CA}{{\mathcal A}}
\newcommand{\CO}{{\mathcal O}}
\DeclareMathOperator{\spec}{Spec}
\DeclareMathOperator{\ord}{ord}
\DeclareMathOperator{\conv}{conv}
\DeclareMathOperator{\Div}{{Div}}
\renewcommand{\div}{\operatorname{div}}
\newcommand{\kst}{\,|\;}
\newcommand{\surj}{\rightarrow\hspace{-0.8em}\rightarrow}
\DeclareMathOperator{\Inc}{\nabla_\cP}   
\DeclareMathOperator{\lin}{lin}
\newcommand{\dfan}{\mathcal{S}}
\newcommand{\marking}{\mathcal{M}}
\newcommand{\xray}{\Sigma^{\times}}
\newcommand{\tv}{\mathbb{TV}}
\begin{document}

\title[Fujita's freeness conjecture for $T$-varieties]
{Fujita's freeness conjecture for $T$-varieties of complexity one}

\author[K.~Altmann]{Klaus Altmann%
}
\address{Institut f\"ur Mathematik,
Freie Universit\"at Berlin,
Arnimalle 3,
14195 Berlin,
Germany}
\email{altmann@math.fu-berlin.de}
\author[N.~Ilten]{Nathan Ilten}
\address{
Dept.~of Mathematics,
Simon Fraser University,
8888 University Drive,
Burnaby BC V5A 1S6,
Canada
}
\email{nilten@sfu.ca}
\thanks{Partial support for KA was provided by the
Pacific Institute for the Mathematical Sciences.
Partial support for KA and NI was provided
 by NSERC. We thank Kelli Talaska for helping us track down a reference.}

\begin{abstract}
We prove Fujita's freeness conjecture for Gorenstein complexity-one $T$-varieties with rational singularities.
\end{abstract}

\maketitle

\section{Introduction}
\label{intro}

In \cite{fujita}, Fujita made the following conjectures:
\begin{conjecture}
	Let $X$ be a projective variety with sufficiently mild singularities, and $H$ an  ample divisor on $X$. 
	\begin{enumerate}
		\item {\emph{Freeness}:} For $m\geq \dim X+1$, $mH+K_X$ is basepoint free.
		\item {\emph{Very ampleness}:} For $m\geq \dim X+2$, $mH+K_X$ is very ample.
	\end{enumerate}
\end{conjecture}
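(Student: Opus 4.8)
The plan is to exploit the combinatorial description of a complexity-one $T$-variety $X$ as the variety associated to a divisorial fan $\mathcal{S}$ on a smooth projective curve $Y$, together with the projection $\pi\colon X\dashrightarrow Y$ realizing the rational quotient by $T$. In this language every $T$-invariant Weil divisor is encoded by a piecewise-affine support function, the canonical class $K_X$ has an explicit formula combining the pullback $\pi^*K_Y$ with boundary contributions from the vertical and horizontal prime divisors, and the space of global sections of an invariant divisor $D$ decomposes into $T$-weight spaces $\Gamma(X,\mathcal{O}(D))=\bigoplus_{u\in M}\Gamma\bigl(Y,\mathcal{O}(\mathcal{D}(u))\bigr)\chi^u$, where each $\mathcal{D}(u)$ is a divisor on $Y$ read off from the support function. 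First I would fix a $T$-invariant representative $D\sim mH+K_X$; since $D$ is invariant, its base locus is a union of $T$-orbit closures, so it suffices to verify basepoint freeness at a single representative point of each orbit, and in fact only the closed orbits, sitting over special points of $Y$, are critical.

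Next I would reduce basepoint freeness at such a point $x$ to producing one weight $u\in M$ whose section space $\Gamma(Y,\mathcal{O}(\mathcal{D}(u)))$ contains an element not vanishing at $\bar x=\pi(x)$. The degree of $\mathcal{D}(u)$ on the curve is governed by three quantities: the contribution of $mH$, which by ampleness grows linearly in $m$ and forces the relevant weight polytopes to be large; the contribution $\pi^*K_Y$ from the canonical twist; and the local boundary corrections at the marked points of $Y$. The Gorenstein hypothesis guarantees that $K_X$, and hence these corrections, are integral, so that the weight lattice $M$ genuinely sees the needed sections. On the curve the relevant statement is classical: a line bundle of degree at least $2g$ is basepoint free, and after absorbing the $K_Y$-twist this threshold is met once $m$ clears the toric freeness bound in the vertical directions plus one additional unit accounting for the curve.

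The key structural input is that the problem splits into a \emph{vertical} part, governed by the polyhedral fibers, and a \emph{horizontal} part, governed by $Y$. For the vertical directions $X$ looks toric, and I would invoke the known toric case of Fujita freeness, which supplies basepoint freeness once $m\geq\dim T+1$; the extra dimension coming from $Y$ is then accounted for by the curve-theoretic gain of one unit, yielding precisely $m\geq\dim T+2=\dim X+1$. Rational singularities enter to make the section computation exact: on a $T$-equivariant resolution $\widetilde X\to X$ one has $R^i\pi_*\mathcal{O}_{\widetilde X}=0$ for $i>0$, and Grauert--Riemenschneider/Kawamata--Viehweg vanishing controls $H^1$ of the relevant twisted sheaves, so that the weight-space computation on $Y$ faithfully computes $\Gamma(X,\mathcal{O}(D))$ and no sections are lost in passing to and from the resolution.

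The step I expect to be the main obstacle is the bookkeeping at the finitely many special fibers, where the polyhedral coefficients are non-integral and the orbit structure degenerates: there one must verify that the combination of the ampleness inequalities for $H$, the canonical corrections, and the rounding forced by passing from $\mathbb{Q}$-divisors on $Y$ to genuine sections still produces a nonvanishing section at the deepest orbits, with no slack in the bound. Matching the vertical toric estimate and the horizontal curve estimate so that they add up to exactly $\dim X+1$ --- rather than losing a unit to rounding at a marked point --- is the delicate point, and it is precisely here that the Gorenstein and rational-singularity hypotheses are indispensable.
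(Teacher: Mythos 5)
Your framework --- the combinatorial dictionary, the weight decomposition $\Gamma(X,\CO(D))=\bigoplus_u \Gamma(Y,\CO(\lfloor h^*(u)\rfloor))\chi^u$, the reduction of basepoint freeness to the torus fixed points, and the final appeal to ``degree $\geq 2g$ implies basepoint free'' on $Y$ --- coincides with the paper's. But the step you flag as ``the main obstacle'' is exactly the heart of the proof, and your proposed mechanism for it does not work: there is no splitting into a ``vertical toric part'' to which toric Fujita freeness applies and a ``horizontal curve part'' contributing one extra unit. At a contraction-free fixed point $x$ over $P\in Y$, a local generator of $\CO(K_X+mH)$ is $f\chi^{u+mv}$ with a prescribed vanishing order $\ord_P f=-\alpha-b_P-(mh)^*_P(mv)$, and the obstruction to extending it to a global section is the nonnegativity of $\ord_Q f+(mh)^*_Q(u+mv)+b_Q$ at \emph{all other} points $Q\in Y$ simultaneously --- a condition on the degree $\sum_Q\bigl(\lceil(mh)^*_Q(u+mv)\rceil+b_Q-1\bigr)$, which is a global quantity involving every $h^*_Q$ at once and is invisible to the local toric model at $x$. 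The paper supplies this via two inputs absent from your outline: first, Lemma \ref{lemma:gorenstein}, where the Gorenstein hypothesis is used not merely for integrality but quantitatively, writing the canonical weight as $u=\sum\lambda_i w_i$ with $0\leq\lambda_i\leq 1$ over a linearly independent set, so that $\lambda=\sum\lambda_i$ satisfies $\lambda+1\leq m$ once $m\geq\dim X+1$; second, Proposition \ref{prop:section}, a convexity argument in the realization polytope $\Inc(\Psi)$ showing that the fiber over $u+mv$ contains an interior lattice point because it contains a Minkowski combination $\sum(\lambda_i/\lambda)S_i$ of $(r-2)$-simplices, each of dilation strictly larger than $r-2$, lying in an integral hyperplane. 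That is what yields Corollary \ref{cor:degree} ($\deg A\geq 2g$) with no loss to rounding; nothing in your vertical/horizontal bookkeeping produces it.

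Two further misattributions of the hypotheses. Rational singularities are not needed to make the section computation exact --- the weight decomposition of Lemma \ref{lemma:gs} holds for any invariant Cartier divisor with no vanishing theorem --- but solely to invoke Fujita's nefness theorem, so that $K_X+mH$ is nef and hence, by Proposition \ref{prop:bp}, no \emph{contraction} fixed point lies in the base locus; your outline does not otherwise handle those points, which sit in the indeterminacy locus of $\pi$ rather than over special points of $Y$ and so admit no single distinguished fiber $\pi^{-1}(\bar x)$ to work on.
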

For smooth curves, these conjectures follow from the Riemann-Roch theorem. Both conjectures have been proven for smooth surfaces by Reider \cite{reider:88a}, and the freeness conjecture has been proven for smooth varieties in dimensions three, four, and five by Ein and Lazarsfeld, Kawamata, and Ye and Zhu, respectively \cite{ein:93a,kawamata:97a,ye}.

Weakening the freeness conjecture, one could instead ask only for $mH+K_X$ to be nef for $m\geq \dim X+1$. This has been shown by Ionescu when $X$ is smooth \cite{ionescu:86a}, and by Fujita when $X$ has normal rational Gorenstein singularities \cite{fujita:87a}. This implies the freeness conjecture for Gorenstein toric varieties,  and the very ampleness conjecture for smooth toric varieties. Indeed, for toric varieties, nef is equivalent to basepoint free, and for smooth toric varieties, ample is equivalent to very ample. Payne later showed with a combinatorial argument that the very ampleness conjecture is also true in the Gorenstein case \cite{payne:06a}.

In this paper, we study Fujita's freeness conjecture for \emph{complexity-one $T$-varieties}, a natural generalization of toric varieties. Recall that a complexity-one $T$-variety is a normal variety $X$ equipped with an effective action of an algebraic torus of dimension $\dim X-1$.
Our main result is that the freeness conjecture holds for such varieties:
\begin{theorem}[Freeness]\label{thm:main}
Let $X$ be a projective Gorenstein complexity-one $T$-variety with at worst rational singularities, and $H$ an ample divisor.
		Then for $m\geq \dim X+1$, $mH+K_X$ is basepoint free.
\end{theorem}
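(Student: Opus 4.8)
The plan is to exploit the combinatorial description of a complexity-one $T$-variety as a divisorial fan $\dfan$ on a smooth projective curve $Y$, and to analyze $L:=mH+K_X$ over $Y$. First I would record the data: the $T$-invariant divisors $H$ and $K_X$ are encoded by piecewise affine support functions on the polyhedral subdivisions $\dfan_P$ (one for each point $P\in Y$), and since $X$ is Gorenstein the support function of $K_X$ is integral. The rational quotient $\pi\colon X\dashrightarrow Y$ has toric generic fibre, so $X$ is a degenerating family of toric varieties over $Y$. The hypothesis of rational singularities enters twice: it is exactly what is needed to apply Fujita's nefness theorem \cite{fujita:87a}, and it provides the cohomology vanishing that lets each graded piece of $H^0$ be computed on $Y$.

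The key reduction is the weight decomposition
$$H^0(X,\mathcal{O}_X(D))=\bigoplus_{u\in M} H^0\bigl(Y,\mathcal{O}_Y(\lfloor D(u)\rfloor)\bigr),$$
where $D(u)$ is the $\Q$-divisor on $Y$ obtained by evaluating the polyhedral coefficients of $D$ at the character $u$. Because the base locus $\mathrm{Bs}\,|L|$ is closed and $T$-invariant, if it were nonempty it would contain a $T$-fixed point; as every closed orbit is at once complete and affine, the closed orbits are precisely the finitely many fixed points. Hence it suffices, for each fixed point $x$ lying over some $P=P(x)\in Y$ and corresponding to a vertex of $\dfan_P$, to produce a global section of $L$ not vanishing at $x$. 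A weight-$u$ section is nonzero at $x$ exactly when $u$ lies in the local weight polytope $\Pi_x$ determined by $x$ and the corresponding section of $\mathcal{O}_Y(\lfloor L(u)\rfloor)$ is nonzero at $P$. This splits the problem into a \emph{toric} task (exhibiting such a lattice weight $u\in\Pi_x$) and a \emph{curve} task (making the resulting line bundle on $Y$ basepoint free at $P$).

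For the toric task I would invoke Fujita's theorem for normal rational Gorenstein singularities \cite{fujita:87a}, giving that $L$ is nef for $m\geq\dim X+1$; translated combinatorially this makes the support function of $L$ convex and guarantees that $\Pi_x$ contains a lattice point in the position required at $x$, exactly as in the toric proof of Fujita freeness, where the threshold $m\geq\dim X+1$ enters through the fibre (projective-space) direction and is seen to be sharp there. For the curve task it suffices to establish $\deg\lfloor L(u)\rfloor\geq 2g$, with $g$ the genus of $Y$, since then $h^1\bigl(Y,\mathcal{O}_Y(\lfloor L(u)\rfloor-P)\bigr)=0$ and the section may be chosen nonvanishing at $P$. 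Writing $L(u)=m\,H(u)+K_X(u)$ and using that $K_X$ supplies the degree-$(2g-2)$ term of $K_Y$ while ampleness of $H$ forces $\deg H(u)$ to be positive for admissible interior $u$, the inequality $m\geq\dim X+1$ should yield $\deg\lfloor L(u)\rfloor\geq 2g-2+m\geq 2g$.

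The main obstacle is the interaction of the two tasks at the most degenerate fixed points, where the admissible lattice weights $u\in\Pi_x$ are most constrained: one must choose a single $u$ lying in $\Pi_x$ that \emph{simultaneously} makes $\deg\lfloor L(u)\rfloor$ large enough, while keeping precise track of the round-downs $\lfloor\,\cdot\,\rfloor$ and of the corrections to $\deg K_X(u)$ supported on the special fibres, which may be negative. I therefore expect the heart of the argument to be a case analysis over the fixed-point types in $\dfan$, with the extremal case pinning down $m\geq\dim X+1$ and confirming that the genus contribution carried by $K_X$ is exactly what bridges the gap between nefness and basepoint freeness along $Y$.
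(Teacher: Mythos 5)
Your overall skeleton matches the paper's: reduce to $T$-fixed points via the weight decomposition (Lemma \ref{lemma:gs}), dispose of the ``degenerate'' (contraction) fixed points using Fujita's nefness theorem \cite{fujita:87a} (Proposition \ref{prop:bp}), and at the remaining fixed points produce a weight-$u$ section by showing a divisor on $Y$ has degree at least $2g$. But the step where you actually get that degree bound is a genuine gap, and the inequality you propose is false. You write that ampleness forces $\deg H(u)>0$ for admissible interior $u$ and conclude $\deg\lfloor L(u)\rfloor\geq 2g-2+m$. This does not follow: $\deg h^*(u)$ can be an arbitrarily small positive rational number, and the round-downs $\lfloor\,\cdot\,\rfloor$ applied pointwise over many points of $Y$ can make $\deg\lfloor (mh)^*(u)\rfloor$ very negative even when $\deg (mh)^*(u)>0$. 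Section \ref{sec:nonfree} of the paper (Theorem \ref{thm:ex}) exhibits exactly this failure: smooth $\KK^*$-surfaces with ample $H_k$ and $\deg\lfloor \Psi^m(2m)\rfloor = 2m+\ell\lfloor -m/\lambda\rfloor<0$ for all $m\leq k$, so $mH_k$ is not basepoint free. In other words, the ``curve task'' cannot be solved by ampleness plus $m\geq\dim X+1$ alone; without the canonical shift the statement is simply wrong, and your own acknowledgement that one must ``keep track of the round-downs'' is precisely the unproved heart of the theorem rather than a technical afterthought.

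What closes the gap in the paper is a mechanism absent from your sketch. First, the Gorenstein hypothesis is used locally (Lemma \ref{lemma:gorenstein}) to write the weight $u$ of a local generator of $\CO(K_X)$ at the fixed point as $u=\sum\lambda_i w_i$ with $0\leq\lambda_i\leq 1$ and the corresponding $\bfw_i$ linearly independent; the evaluation point is then $u+mv$, shifted by this canonical weight, not an arbitrary admissible $u$. Second, the degree bound is obtained by a convexity argument in the realization polytope $\Inc(\Psi)$ (Proposition \ref{prop:section}): the fibers over the points $mv+\lambda w_i$ contain integral $(r-2)$-simplices $S_i$ of dilation strictly larger than $r-2$ (this uses $\deg(mh)^*(mv)>0$ at the contraction-free vertex, concavity of $\deg\Psi$, and $\lambda+1\leq m$, which is where $m\geq\dim X+1$ enters), and their convex combination produces an interior lattice point of $\Inc(\Psi)_{u+mv}$; Corollary \ref{cor:degree} then converts this lattice point into the bound $\deg A\geq 2g$ for the correctly rounded divisor $A$. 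Your proposed ``case analysis over fixed-point types'' does not substitute for this: the issue is not which type of fixed point one is at, but that the positivity surviving the floor functions comes from the canonical shift and the convexity of $\Inc(\Psi)$, not from $\deg H(u)>0$.
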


There are two key ideas going into the proof of this theorem. 
The first is to use the convexity of a polytope (the so-called \emph{realization polytope}) to provide a lower bound for the degrees of some special divisors on a curve, see Proposition \ref{prop:section} and Corollary \ref{cor:degree}.
The second idea is that for nef divisors on complexity-one $T$-varieties, certain torus fixed points are never part of the base locus, see Proposition \ref{prop:bp}. 

Unlike in the toric case, for divisors on complexity-one $T$-varieties, nef does not imply basepoint free, nor does ample imply very ample. In fact, we show that there is a sequence of smooth $\KK^*$-surfaces $X_k$ with ample divisors $H_k$ such that for $m\leq k$, $m H_k$ is not basepoint free, see Theorem \ref{thm:ex}.

The reason that the freeness conjecture for complexity-one $T$-varieties is so tractable is that these varieties can be described using combinatorial language. We recall the concepts we need in \S \ref{sec:prelim}. In order to better understand the global sections of a $T$-invariant divisor, we introduce the realization polytope in \S \ref{sec:polytope}. After doing some local computations in \S \ref{sec:local}, we prove our main result in \S \ref{sec:fujita}. In \S \ref{sec:nonfree} we conclude by presenting an example of a sequence of smooth $\KK^*$-surfaces with ample divisors such that arbitrarily high multiples are not basepoint free. Throughout this paper, we work over an algebraically closed field $\KK$ of characteristic zero.

\section{Preliminaries on $T$-varieties}\label{sec:prelim}
In this section, we fix notation and recall basic facts about $T$-varieties. For more details on $T$-varieties, see \cite{survey}.

\subsection{Complete Complexity-One $T$-Varieties}
Fix an algebraic to\-rus $T$ with character lattice $M$. Let $N$ be the dual lattice, that is, the lattice of one-parameter subgroups. We denote the associated $\QQ$-vector spaces by $M_\QQ$ and $N_\QQ$.

\begin{definition}
A complexity-one $T$-variety is a normal variety $X$ endowed with an effective $T$-action satisfying $\dim X=\dim T+1$.
\end{definition}

Complete complexity-one $T$-varieties can be encoded via combinatorics:
\begin{definition}\cite{polarized}
A \emph{fansy divisor} $\dfan$ on a smooth projective curve $Y$ is a formal sum 
\[
	\sum_{P\in Y} \dfan_P\otimes P
\]
where $\dfan_P$ are polyhedral complexes whose support is all of $N_\QQ$, sharing a common recession fan $\Sigma$, and such that $\dfan_P\neq \Sigma$ for only finitely many $P$.

When $Y=\PP^1$, a \emph{marking} of $\dfan$ is a subset $\marking$ of the cones of $\Sigma$ such that
\begin{enumerate}
\item If $\tau\prec \sigma$ and $\tau\in \marking$, then $\sigma\in\marking$;
	\item If $\sigma\in \marking$ is full-dimensional,
\[
\deg \dfan^\sigma:=	\sum_{P\in Y} \dfan_P^\sigma  \subsetneq \sigma
\]
where $\dfan_P^\sigma$ denotes the unique polyhedron in $\dfan$ with recession cone $\sigma$;
	\item If $\sigma\in \marking$ is full-dimensional,
		and $\tau\prec \sigma$, then $\tau\in\marking$ if and only if
\[
	(\deg \dfan^\sigma)\cap \tau \neq \emptyset .
\]
\end{enumerate}
	For curves $Y$ of higher genus, a marking is defined similarly, with an additional technical condition that will play no role for us \cite[\S1]{polarized}.
\end{definition}

	In analogy with the correspondence between toric varieties and polyhedral fans, there is a correspondence between complete complexity-one $T$-varieties and marked fansy divisors \cite[\S 1]{polarized}. We will denote the $T$-variety corresponding to a marked fansy divisor $(\dfan,\marking)$ by $\tv(\dfan,\marking)$. Such a variety comes with a rational quotient map $\pi:\tv(\dfan,\marking)\dashrightarrow Y$. The resolution of this rational map is given by the quotient map $\tv(\dfan,\emptyset)\to Y$. 
	
The fixed points of $X=\tv(\dfan,\marking)$ are of two types. A \emph{contraction} fixed point is one contained in the indeterminacy locus of $\pi$. Such fixed points are in bijection to the maximal elements of $\marking$. We call other fixed points \emph{contraction-free}. The contraction-free fixed points contained in a fiber $\pi^{-1}(P)$ are in bijection with the maximal-dimensional polyhedra in $\dfan_P$ whose recession cones are not marked.
This follows from the description of $T$-orbits given in \cite[\S 10]{pdiv}.

\begin{figure}
	\begin{align*}
	\begin{tikzpicture}
		\draw [lightgray, thin, dashed] (-2,-2) grid (2,2);
		\draw [ultra thick] (-2,-2) -- (-1,-1) -- (-1,1) -- (-2,2);
		\draw [ultra thick] (2,2) -- (1,1) -- (1,-1) -- (2,-2);
		\draw [ultra thick] (1,1) -- (-1,1);
		\draw [ultra thick] (-1,-1) -- (1,-1);
		\draw [fill] (1,1)  circle [radius=.07];
		\draw [fill] (-1,1) circle [radius=.07];
		\draw [fill] (1,-1) circle [radius=.07];
		\draw [fill] (-1,-1) circle [radius=.07];
		\draw  (0,0) circle [radius=.1];
		\node at (0,-2.5) {$C_1$}; 
		\node [below right] at (-1,-1) {$5$};
		\node [below left] at (1,-1) {$5$};
		\node [above right] at (-1,1) {$5$};
		\node [above left] at (1,1) {$5$};
	\end{tikzpicture}\qquad
	\begin{tikzpicture}
		\draw [lightgray, thin, dashed] (-2,-2) grid (2,2);
		\draw [ultra thick] (-1,2) -- (2,-1);
		\draw [ultra thick] (-2,-2) -- (2,2);
		\draw [fill] (.5,.5)  circle [radius=.07];
		\draw  (0,0) circle [radius=.1];
		\node at (0,-2.5) {$C_2$}; 
		\node [right] at (.5,.5) {$-1$};
	\end{tikzpicture}
	\\
	\begin{tikzpicture}
		\draw [lightgray, thin, dashed] (-2,-2) grid (2,2);
		\draw [ultra thick] (1,-2) -- (-2,1);
		\draw [ultra thick] (-2,-2) -- (2,2);
		\draw [fill] (-.5,-.5)  circle [radius=.07];
		\draw  (0,0) circle [radius=.1];
		\node at (0,-2.5) {$C_3$}; 
		\node [left] at (-.5,-.5) {$-1$};
	\end{tikzpicture}\qquad
	\begin{tikzpicture}
		\draw [lightgray, thin, dashed] (-2,-2) grid (2,2);
		\draw [ultra thick] (2,-2) -- (-2,2);
		\draw [ultra thick] (-2,-2) -- (2,2);
		\draw  (0,0) circle [radius=.1];
		\node at (0,-2.5) {$\Sigma$}; 
		\node [right] at (-1.9,2) {$1$};
		\node [right] at (-1.9,-2) {$1$};
		\node [left] at (1.9,2) {$1$};
		\node [left] at (1.9,-2) {$1$};
	\end{tikzpicture}
	\end{align*}
	\caption{A singular $(\KK^*)^2$ threefold}\label{fig:ex1}
\end{figure}

\begin{example}[A singular $(\KK^*)^2$ threefold]\label{ex1}
	Consider the polyhedral subdivisions $C_1,C_2,C_3,\Sigma$ of $\QQ^2$ pictured in Figure \ref{fig:ex1}; the origin is marked in each subdivision by a hollow circle. We construct a fansy divisor on $\PP^1$ by
	\[
\dfan=C_1\otimes P_0+C_2\otimes (P_1+P_2+P_3)+C_3\otimes(P_4+P_5+P_6)
	\]
	where $P_0,\ldots,P_6$ are distinct points on $\PP^1$. This fansy divisor has $\Sigma$ as its recession fan. The corresponding variety $\tv(\dfan,\emptyset)$ is a threefold with Gorenstein singularities.
\end{example}

\subsection{Invariant Divisors}\label{sec:divisors}
Fix a marked fansy divisor $(\dfan,\marking)$ on $Y$ with recession fan $\Sigma$. Let $\xray$ be the set of those rays $\rho\in \Sigma$ which are not in $\marking$.
Following \cite{divisors,polarized}, invariant prime divisors on $X=\tv(\dfan,\marking)$ come in two types. \emph{Vertical divisors} correspond to pairs $(P,v)$, where $P\in Y$ and $v$ is a vertex of $\dfan_P$; we denote the corresponding divisor by $D_{P,v}$. \emph{Horizontal divisors} correspond to rays $\rho\in \xray$; we denote the corresponding divisor by $D_\rho$. 

Semi-invariant rational functions on $X$ are elements of $\KK(Y)[M]$.
\begin{lemma}[{\cite[Proposition 3.14]{divisors}}]\label{lemma:haupt}
	The principal divisor associated to $f\cdot \chi^u\in \KK(Y)[M]$ is 
	\[
		\sum_{\rho\in\xray} \langle \rho,u\rangle D_\rho+\sum_{{P,v}} \mu(v)(\langle u,v\rangle +\ord_P f) D_{P,v}
	\]
	where $\ord_P f$ is the order of vanishing of $f$ at $P$, $\mu(v)$ is the smallest natural number such that $\mu(v)v\in M$, and by abuse of notation, $\rho$ denotes both a ray and its primitive lattice generator.
\end{lemma}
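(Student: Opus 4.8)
The plan is to compute the coefficient of $\div(f\cdot\chi^u)$ along each invariant prime divisor separately, reducing everything to a single local toric calculation. First I would observe that $f\cdot\chi^u$ is a $T$-semi-invariant, so its principal divisor is $T$-invariant and hence supported on the invariant prime divisors classified above, namely the horizontal $D_\rho$ ($\rho\in\xray$) and the vertical $D_{P,v}$. Thus it suffices to compute the order of vanishing $\ord_{D}(f\cdot\chi^u)$ for each such $D$. Since the coefficient along a divisor lying over $P\in Y$ depends only on the local data at $P$, I can localize $Y$ at $P$ and work over the DVR $\cO_{Y,P}$. The marking is handled at the end by a pushforward: I would carry out the computation on the resolution $\tv(\dfan,\emptyset)\to X$, whose contracted divisors are exactly the $D_\rho$ with $\rho$ marked, so that the pushforward of the principal divisor drops precisely those terms and leaves the sum over $\xray$.

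The heart of the argument is to exhibit the local toric model. Fixing $P$ with uniformizer $t$, the restriction of $X$ over $\Spec\cO_{Y,P}$ is a toric variety for the enlarged lattice $\widetilde N=N\oplus\ZZ$, with the extra $\KK^*$-factor acting through $t$; its fan is the ``cone over $\dfan_P$ at height one'', i.e.\ each polyhedron $\Delta\in\dfan_P$ is replaced by $\operatorname{cone}\big((\Delta\times\{1\})\cup(\Sigma\times\{0\})\big)\subseteq\widetilde N_\QQ$. This rests on the affine identity $\bigoplus_u t^{\lceil-\min\langle u,\Delta\rangle\rceil}\KK[t]\,\chi^u=\KK[\widetilde\sigma^\vee\cap\widetilde M]$ underlying the correspondence, which is exactly the statement that a complexity-one $T$-variety is toric over a DVR. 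Under this model the vertical divisor $D_{P,v}$ is the toric divisor of the ray through $(v,1)$, whose primitive lattice generator is $(\mu(v)\,v,\mu(v))$ since $\mu(v)$ is the least positive integer with $\mu(v)v\in N$; the horizontal divisor $D_\rho$ is the toric divisor of the ray $(\rho,0)$.

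Finally I would read off the valuations from the toric order-of-vanishing formula. Writing $f=t^{\ord_P f}\cdot(\text{unit at }P)$, the semi-invariant $f\cdot\chi^u$ agrees, up to a factor that is a unit along every divisor over $P$, with the character $\chi^u\,t^{\ord_P f}$ of $\widetilde M=M\oplus\ZZ$. Hence
\[
\ord_{D_{P,v}}(f\cdot\chi^u)=\big\langle (u,\ord_P f),\,(\mu(v)v,\mu(v))\big\rangle=\mu(v)\big(\langle u,v\rangle+\ord_P f\big),
\]
while, since the ray $(\rho,0)$ pairs trivially with the $t$-direction,
\[
\ord_{D_\rho}(f\cdot\chi^u)=\big\langle (u,\ord_P f),\,(\rho,0)\big\rangle=\langle u,\rho\rangle,
\]
the latter being independent of $P$, consistent with $D_\rho$ dominating $Y$ so that $\pi^* f$ is a unit there. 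Summing over all invariant divisors gives the claimed formula.

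The main obstacle is the second step: justifying the local toric model and, in particular, correctly identifying the primitive generator $(\mu(v)v,\mu(v))$ of the ray attached to a (possibly non-lattice) vertex $v$ — this is precisely what produces the ramification factor $\mu(v)$ multiplying both $\langle u,v\rangle$ and $\ord_P f$. Once this identification (essentially the content of the orbit-divisor dictionary of \cite{pdiv}) is in place, the horizontal case is the classical toric computation, and the remaining points — additivity of valuations, discarding units of $\cO_{Y,P}$, globalizing over the finitely many special points of $Y$, and handling the marking by pushforward — are routine bookkeeping.
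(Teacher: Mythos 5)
The paper gives no proof of this lemma: it is quoted directly from \cite[Proposition 3.14]{divisors}, so there is no in-paper argument to compare against. Your proposal is a correct reconstruction of the standard proof from that reference --- localizing over the DVR $\cO_{Y,P}$ to obtain a toric model for $N\oplus\ZZ$ whose relevant rays have primitive generators $(\mu(v)v,\mu(v))$ and $(\rho,0)$, reading off the valuations of $\chi^u t^{\ord_P f}$, noting that $f$ is a unit along the horizontal divisors since they dominate $Y$, and handling the marking by pushing forward from $\tv(\dfan,\emptyset)$ --- and I see no gaps.
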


Invariant \emph{Cartier} divisors on $\tv(\dfan,\marking)$
may be encoded by a collection $h=(h_P)$ of piecewise affine functions $h_P:|\dfan_P| \to \QQ$ satisfying some additional properties, see
\cite[Definition 3.4, 3.8 ]{divisors}.
In particular, the \emph{linear part}
\[
h_P^{\lin}(v):=\lim_{k\to \infty} \frac{h(kv)}{v}
\]
of $h_P$ is some piecewise linear function $h^{\lin}$ independent of $P$. It is linear on the cones of the recession fan $\Sigma$.
We denote the divisor corresponding to $h=(h_P)$ by $D_h$, and we have the following formula \cite[Corollary 3.19]{divisors}:
\[
	D_h=-\sum_{\rho\in\xray} h^{\lin}(\rho) D_\rho-\sum_{{P,v}} \mu(v)h_P(v) D_{P,v}.
\]
Such a collection of functions $h=(h_P)$ is called a \emph{Cartier support function}.

Let $h$ be a Cartier support function. On each maximal cell $\Delta$ of $\dfan_P$, $(h_P)_{|\Delta}$ is affine linear of the form 
\[
	(h_P)_{|\Delta}(v)=\langle v,u\rangle +a_{P}
\]
for some $u\in M$, $a_P\in\ZZ$. This determines local generators for $\CO(D_h)$ as follows. If $\Delta$ corresponds to a contraction-free fixed point $x$, a local generator for $\CO(D_h)$ at $x$ is given by  $f\cdot \chi^u$, where $f\in \KK(Y)$ is any function satisfying $\ord_P f=a_P$. For a contraction fixed point $x$ corresponding to a full-dimensional $\sigma\in\marking$, a local generator for $\CO(D_h)$ at $x$  is $f\cdot \chi^u$
with  $\ord_P f=a_P$ for all $P$, with $u$ and $a_P$ determined as above by restricting $h_P$ to $\dfan_P^\sigma$.

Given a Cartier support function $h$, we set
\[
\square=\square_h:=\{u\in M_\Q\kst h^{\lin}(v)\leq \langle v,u\rangle
\ \forall v\in N_\Q\}.
\]
On this polytope we have the piecewise affine concave 
functions
\begin{align*}
	h_P^*:&\Box\to \QQ\\
	&u\mapsto \min_{v\in \dfan_P{(0)}}
\big(\langle v,u\rangle -h_P(v)\big)
\end{align*}
for $P\in Y$.
Here, $\dfan_P(0)$ denotes the vertices of $\dfan_P$.
Set $h^*(u)=\sum_P h_P^*(u)\cdot P\in\Div_\QQ Y$.

\begin{lemma}[{\cite[Proposition 3.23]{divisors}}]\label{lemma:gs}
For a Cartier divisor $D_h$, there is a graded isomorphism
\[
	H^0(X,\CO(D_h))\cong \bigoplus_{u\in \Box_h\cap M} H^0(Y,\CO(\lfloor h^*(u)\rfloor ))\cdot \chi^u.
\]
\end{lemma}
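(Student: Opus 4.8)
The plan is to decompose both sides by $T$-weight and match the weight spaces. Since $D_h$ is $T$-invariant, the sheaf $\CO(D_h)$ carries a $T$-linearization, so $H^0(X,\CO(D_h))$ splits as $\bigoplus_{u\in M} H^0(X,\CO(D_h))_u$ into its isotypical components; this is precisely the $M$-grading on the left-hand side, and the character $\chi^u$ on the right records the weight. It therefore suffices to identify each weight space $H^0(X,\CO(D_h))_u$ with $H^0(Y,\CO(\lfloor h^*(u)\rfloor))\cdot\chi^u$ when $u\in\Box_h$, and to show it vanishes when $u\notin\Box_h$.

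First I would parametrize the weight-$u$ sections. Every semi-invariant rational function of weight $u$ has the form $f\cdot\chi^u$ with $f\in\KK(Y)$, and such an element is a global section of $\CO(D_h)$ exactly when $\div(f\chi^u)+D_h\ge 0$. Combining Lemma \ref{lemma:haupt} with the formula for $D_h$ from \S\ref{sec:divisors}, this divisor equals
\[
\sum_{\rho\in\xray}\big(\langle\rho,u\rangle-h^{\lin}(\rho)\big)D_\rho+\sum_{P,v}\mu(v)\big(\langle u,v\rangle+\ord_P f-h_P(v)\big)D_{P,v},
\]
so effectivity is equivalent to the horizontal inequalities $\langle\rho,u\rangle\ge h^{\lin}(\rho)$ for all $\rho\in\xray$, together with the vertical inequalities $\ord_P f\ge h_P(v)-\langle u,v\rangle$ for every $P$ and every vertex $v\in\dfan_P(0)$ (using $\mu(v)>0$). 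For fixed $P$ the binding constraint over the vertices reads $\ord_P f\ge -\min_{v}(\langle u,v\rangle-h_P(v))=-h_P^*(u)$, and since $\ord_P f$ is an integer this is the same as $\ord_P f\ge -\lfloor h_P^*(u)\rfloor$. Hence the vertical inequalities hold for all $P$ precisely when $f\in H^0(Y,\CO(\lfloor h^*(u)\rfloor))$. For $u\in\Box_h$ the horizontal inequalities are automatic, since the defining condition $h^{\lin}(v)\le\langle v,u\rangle$ for all $v\in N_\QQ$ specializes to the rays $\rho\in\xray$; so in this case the weight space is exactly $H^0(Y,\CO(\lfloor h^*(u)\rfloor))\cdot\chi^u$, as claimed.

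The main obstacle is the vanishing statement for $u\notin\Box_h$, where one must reconcile the fact that the horizontal inequalities range only over the unmarked rays $\xray$ with the definition of $\Box_h$, which constrains $u$ on all of $N_\QQ$ and hence also along marked cones. If the violation $\langle v,u\rangle<h^{\lin}(v)$ already occurs at some unmarked ray $\rho$, then the coefficient of $D_\rho$ is negative for every $f$ and the weight space is zero. The remaining case, where $u$ violates $\Box_h$ only along a full-dimensional marked cone $\sigma$, is where the marking enters: near the contraction fixed point attached to $\sigma$ the quotient direction to $Y$ collapses, so regularity of a weight-$u$ section should force $u-u_\sigma\in\sigma^\vee$ (writing $h^{\lin}|_\sigma=\langle\,\cdot\,,u_\sigma\rangle$), contradicting the violation.

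Concretely, I would translate this into the estimate $\deg h^*(u)=\sum_P h_P^*(u)<0$: bounding each $h_P^*(u)$ from above by its value at a vertex $w_P$ of the cell $\dfan_P^\sigma$ and summing, the vertex $\sum_P w_P$ of $\deg\dfan^\sigma$ is controlled by the degree condition $\deg\dfan^\sigma\subsetneq\sigma$ built into the marking, which is exactly what yields the required strict negativity. Since the floor can only decrease a sum, $\sum_P h_P^*(u)<0$ gives $\deg\lfloor h^*(u)\rfloor=\sum_P\lfloor h_P^*(u)\rfloor<0$, and a line bundle of negative degree on the curve $Y$ has no nonzero sections; thus $H^0(Y,\CO(\lfloor h^*(u)\rfloor))=0$ and the weight space again vanishes. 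Carrying out this last estimate carefully, using the precise form of the Cartier and marking conditions to pin down the contribution of $\sum_P a_P$ at the contraction fixed point, is the technical heart of the argument.
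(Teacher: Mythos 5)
The paper contains no proof of this lemma: it is quoted directly from \cite[Proposition 3.23]{divisors}, so there is no internal argument to compare yours against. Taken on its own terms, your proposal is a correct reconstruction of the standard proof. The weight decomposition, the reduction to effectivity of $\div(f\chi^u)+D_h$ via Lemma \ref{lemma:haupt} and the formula for $D_h$, the translation of the vertical conditions into $f\in H^0(Y,\CO(\lfloor h^*(u)\rfloor))$, and the observation that the horizontal conditions are automatic for $u\in\Box_h$ are all exactly right. You also correctly isolate the one delicate point: for $u\notin\Box_h$ the divisorial conditions only see the unmarked rays, so the vanishing of the weight space must come from $\deg\lfloor h^*(u)\rfloor<0$, and that is indeed the right mechanism.

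Since you leave that estimate as a sketch, let me confirm it closes, because the three ingredients you name are precisely the ones needed. As $h^{\lin}$ is linear on each maximal cone $\sigma$, say $h^{\lin}|_\sigma=\langle\cdot\,,u_\sigma\rangle$, and $\sigma^\vee$ is cut out by the rays of $\sigma$, any failure of $u\in\Box_h$ occurs at a ray $\rho$ of some maximal $\sigma$, i.e.\ $\langle\rho,u-u_\sigma\rangle<0$. If $\rho\in\xray$ a horizontal coefficient is negative and you are done; otherwise $\rho$ is marked, hence $\sigma$ is marked by condition (1), and $\deg\dfan^\sigma\cap\rho\neq\emptyset$ by condition (3). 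Write $\dfan_P^\sigma=Q_P+\sigma$ with $Q_P$ the convex hull of the vertices of $\dfan_P^\sigma$ and $Q=\sum_PQ_P$, so that $\deg\dfan^\sigma=Q+\sigma$ and $Q\subseteq\deg\dfan^\sigma\subsetneq\sigma$. A point $c\rho\in\deg\dfan^\sigma$ must have $c>0$ (if $0\in\deg\dfan^\sigma$ then $\deg\dfan^\sigma\supseteq\sigma$, contradicting properness), and writing $c\rho=q+s$ with $q\in Q\subseteq\sigma$ and $s\in\sigma$, the face property of $\rho\preceq\sigma$ forces $q\in\rho\setminus\{0\}$, so $\min_Q\langle\cdot\,,u-u_\sigma\rangle\leq\langle q,u-u_\sigma\rangle<0$. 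Finally, on $\dfan_P^\sigma$ one has $h_P(v)=\langle v,u_\sigma\rangle+a_P$ where the Cartier condition at the contraction fixed point (the existence of the local generator $f\chi^{u_\sigma}$ with $\ord_Pf=a_P$ for all $P$) forces $\sum_Pa_P\cdot P$ to be principal, hence of degree zero. Bounding each $h_P^*(u)$ by the minimum of $\langle\cdot\,,u-u_\sigma\rangle-a_P$ over the vertices of $\dfan_P^\sigma$ and summing, using $\sum_P\min_{Q_P}=\min_Q$ for a linear functional, gives $\deg h^*(u)\leq\min_Q\langle\cdot\,,u-u_\sigma\rangle<0$, which kills $H^0(Y,\CO(\lfloor h^*(u)\rfloor))$ on a curve of any genus. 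So your outline is sound and complete once this paragraph is written out.
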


\begin{remark}
	Lemma \ref{lemma:gs} can be rephrased as follows: for $f\in \KK(Y)$ and $u\in M$, $f\cdot \chi^u$ is a global section of $\CO(D_h)$ if and only if $u\in \Box_h$ and 
	\[
-\ord_P f\leq h_P^*(u)
	\]
	for all $P\in Y$.
\end{remark}

When taking multiples of a divisor $D_h$, $h$ and $h^*$ behave as expected: for $m\in \NN$,
\begin{align*}
	m\cdot D_h&=D_{mh}\\
	\Box_{mh}&=m\cdot\Box_h\\
	(mh)^*(u)&=m\cdot h^*(u/m).
\end{align*}

\begin{example}\label{ex2}
	We continue Example \ref{ex1}. We consider the Weil divisor $H$ on $X=\tv(\dfan,\emptyset)$ where the coefficients for the prime divisors $D_{P,v}$ and $D_\rho$ are recorded in Figure \ref{fig:ex1}. This divisor is Cartier; the piecewise linear functions $h_P$ are described on each region of linearity in Figure~\ref{fig2} as $h_P(v)=\langle v,u\rangle +a_P$. The polytope $\Box_h$ and the functions $h_P^*$ are described in Figure \ref{fig3}; the subdivisions of $\Box_h$ indicate the regions of linearity of $h_P^*$, with values recorded at the vertices.

	In particular, 
	\[
		h^*((0,0))=5P_0-\frac{1}{2}(P_1+P_2+\ldots+P_6)
	\]
	which implies by Lemma \ref{lemma:gs} that $H$ has no global sections of the form $f\cdot \chi^{(0,0)}$ with $f\in \KK(\PP^1)$.
\end{example}

\begin{figure}
	\begin{align*}
	\begin{tikzpicture}
		\draw [ultra thick] (-2,-2) -- (-1,-1) -- (-1,1) -- (-2,2);
		\draw [ultra thick] (2,2) -- (1,1) -- (1,-1) -- (2,-2);
		\draw [ultra thick] (1,1) -- (-1,1);
		\draw [ultra thick] (-1,-1) -- (1,-1);
		\draw  (0,0) circle [radius=.1];
		\node [above,gray] at (0,0) {$u=(0,0)$};
		\node [below,gray] at (0,0) {$a_P=-5$};
		\node [above,gray] at (0,-2) {$u=(0,1)$};
		\node [below,gray] at (0,-2) {$a_p=-4$};
		\node [above,gray] at (0,2) {$u=(0,-1)$};
		\node [below,gray] at (0,2) {$a_P=-4$};
		\node [above right,gray] at (1,0) {$u=(-1,0)$};
		\node [below right,gray] at (1,0) {$a_P=-4$};
		\node [above left,gray] at (-1,0) {$u=(1,0)$};
		\node [below left,gray] at (-1,0) {$a_P=-4$};
	\end{tikzpicture}\qquad
	\begin{tikzpicture}
		\draw [ultra thick] (-1,2) -- (2,-1);
		\draw [ultra thick] (-2,-2) -- (2,2);
		\draw  (0,0) circle [radius=.1];
		\node [above,gray] at (0,-2) {$u=(0,1)$};
		\node [below,gray] at (0,-2) {$a_p=0$};
		\node [above,gray] at (.5,2) {$u=(0,-1)$};
		\node [below,gray] at (.5,2) {$a_P=1$};
		\node [above right,gray] at (1,.5) {$u=(-1,0)$};
		\node [below right,gray] at (1,.5) {$a_P=1$};
		\node [above left,gray] at (-1,0) {$u=(1,0)$};
		\node [below left,gray] at (-1,0) {$a_P=0$};
	\end{tikzpicture}
	\\
	\\
	\begin{tikzpicture}
		\draw [ultra thick] (1,-2) -- (-2,1);
		\draw [ultra thick] (-2,-2) -- (2,2);
		\draw  (0,0) circle [radius=.1];
		\node [above,gray] at (-.5,-2) {$u=(0,1)$};
		\node [below,gray] at (-.5,-2) {$a_p=1$};
		\node [above,gray] at (0,2) {$u=(0,-1)$};
		\node [below,gray] at (0,2) {$a_P=0$};
		\node [above right,gray] at (1,0) {$u=(-1,0)$};
		\node [below right,gray] at (1,0) {$a_P=0$};
		\node [above left,gray] at (-1,-.5) {$u=(1,0)$};
		\node [below left,gray] at (-1,-.5) {$a_P=1$};
	\end{tikzpicture}\qquad
	\begin{tikzpicture}
		\draw [ultra thick] (2,-2) -- (-2,2);
		\draw [ultra thick] (-2,-2) -- (2,2);
		\draw  (0,0) circle [radius=.1];
		\node [above,gray] at (0,-2) {$u=(0,1)$};
		\node [below,gray] at (0,-2) {$a_p=0$};
		\node [above,gray] at (0,2) {$u=(0,-1)$};
		\node [below,gray] at (0,2) {$a_P=0$};
		\node [above right,gray] at (1,0) {$u=(-1,0)$};
		\node [below right,gray] at (1,0) {$a_P=0$};
		\node [above left,gray] at (-1,0) {$u=(1,0)$};
		\node [below left,gray] at (-1,0) {$a_P=0$};
	\end{tikzpicture}
	\end{align*}
	\caption{A Cartier support function}\label{fig2}

	\vspace{1cm}

	\begin{tikzpicture}
	\draw [lightgray, thin, dashed] (-2,-2) grid (2,2);
	\draw [ultra thick] (0,-1) -- (-1,0) -- (0,1) -- (1,0) -- (0,-1);
	\draw [ultra thick,dashed] (0,-1) --  (0,1);
	\draw [ultra thick,dashed] (-1,0) --  (1,0);
	\node [left] at (-1,0) {$4$};
	\node [right] at (1,0) {$4$};
	\node [above] at (0,1) {$4$};
	\node [below] at (0,-1) {$4$};
	\node [below right] at (0,0) {$5$};
	\node at (0,-2.3) {$P=P_0$};
	\end{tikzpicture}
	\begin{tikzpicture}
	\draw [lightgray, thin, dashed] (-2,-2) grid (2,2);
	\draw [ultra thick] (0,-1) -- (-1,0) -- (0,1) -- (1,0) -- (0,-1);
	\node [left] at (-1,0) {$-1$};
	\node [right] at (1,0) {$0$};
	\node [above] at (0,1) {$0$};
	\node [below] at (0,-1) {$-1$};
	\node at (0,-2.3) {$P=P_1,P_2,P_3$};
	\end{tikzpicture}
	\begin{tikzpicture}
	\draw [lightgray, thin, dashed] (-2,-2) grid (2,2);
	\draw [ultra thick] (0,-1) -- (-1,0) -- (0,1) -- (1,0) -- (0,-1);
	\node [left] at (-1,0) {$0$};
	\node [right] at (1,0) {$-1$};
	\node [above] at (0,1) {$-1$};
	\node [below] at (0,-1) {$0$};
	\node at (0,-2.3) {$P=P_4,P_5,P_6$};
	\end{tikzpicture}

	\caption{$\Box_h$ and $h^*$}\label{fig3}
\end{figure}

\subsection{Positivity and adjunction}\label{sec:adjunction}

When $D_h$ is nef, each $h_P$ is a concave function on $\dfan_P$ \cite[Corollary 3.29]{divisors}. This implies that each $(u,-a_P)$ determined from a maximal cell $\Delta$ of $\dfan_P$ above is a vertex of the graph of $h_P^*$, and these are the only vertices.
In particular, $\Box_h$ is a lattice polytope, and the vertices of the graph of $h_P^*$ are all integral.
If $x$ is a contraction-free fixed point of $X$ in $\pi^{-1}(P)$, there is a corresponding vertex $(u,h_P^*(u))$ of the graph of $h_P^*$ with  a local generator for $\CO(D_h)$ at $x$ is given by $f\cdot \chi^u$, where $\ord_P f=-h_P^*(u)$. On the other hand, if $x$ is a contraction fixed point of $X$, the corresponding $u$ is actually a vertex of $\Box_h$, and a local generator for $\CO(D_h)$ at $x$ is given by $f\cdot \chi^u$, where $\ord_P f=-h_P^*(u)$ for all $P\in Y$.
 See \cite[\S 3]{polarized} for details.

When $D_h$ is ample, $h_P$ is strictly concave on $\dfan_P$, 
and we obtain a bijection between maximal cells in $\dfan_P$ and vertices of the graph of $h_P^*$.
In this situation, contraction fixed points correspond to vertices $u$ of $\Box_h$ with $h^*(u)$ a principal divisor, and non-contraction fixed points in $\pi^{-1}(P)$ correspond to vertices $(u,h_P^*(u))$ of the graph of $h_P^*(u)$ satisfying $\deg h^*(u) >0$.

\begin{prop}\label{prop:bp}
Let $D$ be a nef $T$-invariant Cartier divisor on a complete complexity-one $T$-variety $X$. Then the base locus of $D$ contains no contraction fixed points.
\end{prop}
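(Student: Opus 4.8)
The plan is to exhibit, for each contraction fixed point $x$ of $X$, an explicit global section of $\CO(D)$ that does not vanish at $x$; producing such a section immediately certifies that $x$ lies outside the base locus. Throughout I would write $D=D_h$ for a Cartier support function $h=(h_P)$, and fix a contraction fixed point $x$, which corresponds to a full-dimensional marked cone $\sigma\in\marking$.

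First I would recall the local description of $\CO(D_h)$ at $x$ from \S\ref{sec:adjunction}. Since $D_h$ is nef, the point $x$ determines a vertex $u$ of $\Box_h$, and a local generator for $\CO(D_h)$ at $x$ is given by $f\cdot\chi^u$ with $f\in\KK(Y)$ satisfying $\ord_P f=-h_P^*(u)$ for every $P\in Y$. Two features of the nef case are what make this usable: $\Box_h$ is a lattice polytope, so that $u\in\Box_h\cap M$; and the vertices of the graph of $h_P^*$ are integral, so each $h_P^*(u)$, hence each $\ord_P f$, is a genuine integer and the prescription for $f$ is consistent.

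The central step is then to observe that this local generator is in fact a \emph{global} section. By the remark following Lemma~\ref{lemma:gs}, the semi-invariant rational function $f\cdot\chi^u$ lies in $H^0(X,\CO(D_h))$ precisely when $u\in\Box_h$ and $-\ord_P f\leq h_P^*(u)$ for all $P\in Y$. The first condition holds because $u$ is a vertex of $\Box_h$, and the second holds with equality, since $-\ord_P f=h_P^*(u)$ by the very choice of $f$. Hence $f\cdot\chi^u$ is a global section, and being a local generator at $x$ it does not vanish there; therefore $x$ is not in the base locus of $D$.

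Finally I would note that the argument is uniform across all contraction fixed points, which are in bijection with the maximal elements of $\marking$, so the base locus contains none of them. The only delicate point is the bookkeeping that identifies the data $(u,\ord_P f)$ of the local generator with the combinatorial data of $h^*$ coming from \S\ref{sec:divisors} and \S\ref{sec:adjunction}; once that matching is in place, the inequality one must verify degenerates to an equality, so no genuine estimate is required. In this sense there is no serious analytic obstacle: the work is entirely in correctly invoking the local structure theory for contraction fixed points and the global-section description of Lemma~\ref{lemma:gs}.
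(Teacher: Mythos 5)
Your proposal is correct and follows essentially the same route as the paper: take the local generator $f\cdot\chi^u$ at the contraction fixed point, with $u$ the corresponding vertex of $\Box_h$ and $\ord_P f=-h_P^*(u)$ for all $P$, and observe via Lemma~\ref{lemma:gs} (the inequality holding with equality) that it is already a global section. The paper's proof is just a terser version of exactly this argument.
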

\begin{proof}
Fixing notation, let $h$ be such that $D=D_h$. Consider a contraction fixed point $x\in X$; this corresponds to a maximal cone $\sigma\in \marking$. 
As noted above, a local generator for $\CO(D_h)$ at $x$ is given by $f\cdot \chi^u$, where $\ord_P f=-h_P^*(u)$ for all $P\in Y$ and $u$ is the  vertex of $\Box_h$ corresponding to $x$. But then $f\cdot \chi^u$ is a global section of $\CO(D_h)$ by Lemma \ref{lemma:gs}.
\end{proof}

\begin{lemma}\label{lemma:bp}
Let $D_h$ be a nef $T$-invariant Cartier divisor on $X=\tv(\dfan,\marking)$. Then 
$D_h$ is basepoint free if and only if for every $(u,h_P^*(u))$ corresponding to a non-contraction fixed point $x$ of $X$, the linear system $|\lfloor h^*(u)\rfloor |$ is non-empty with no basepoint at $P$.
\end{lemma}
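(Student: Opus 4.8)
The plan is to reduce basepoint-freeness to a statement about the $T$-fixed points, and then to analyze each non-contraction fixed point separately using the explicit local generators together with the description of global sections in Lemma \ref{lemma:gs}. First I would observe that, since $D_h$ is $T$-invariant, its base locus is a closed $T$-invariant subscheme of the complete variety $X$, hence itself complete. By Borel's fixed point theorem, a nonempty complete variety with a torus action contains a $T$-fixed point, so the base locus is empty if and only if it contains no fixed point of $X$. Thus $D_h$ is basepoint free if and only if no fixed point lies in the base locus. Since $D_h$ is nef, Proposition \ref{prop:bp} already shows that contraction fixed points are never base points, so the problem reduces to deciding, for each non-contraction fixed point $x$, whether $x$ is a base point.

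So fix a non-contraction fixed point $x\in\pi^{-1}(P)$, corresponding to a vertex $(u,h_P^*(u))$ of the graph of $h_P^*$. Recall from \S\ref{sec:adjunction} that, $D_h$ being nef, this vertex is integral, so $h_P^*(u)=\lfloor h_P^*(u)\rfloor$, and a local generator of $\CO(D_h)$ at $x$ is $f\cdot\chi^u$ with $\ord_P f=-h_P^*(u)$. The key reduction is that only sections of weight $u$ are relevant at $x$: evaluation $H^0(X,\CO(D_h))\to \CO(D_h)_x\otimes\KK(x)$ is $T$-equivariant, and the one-dimensional fibre at $x$ is a representation of weight $u$ (the weight of the local generator $f\chi^u$). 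Therefore every homogeneous section of weight $u'\neq u$ evaluates to $0$ at $x$, so an arbitrary section and its weight-$u$ component have the same value at $x$. Consequently $x$ is not a base point if and only if some weight-$u$ section of $\CO(D_h)$ is nonzero at $x$.

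Next I would translate this into a condition on $Y$. By Lemma \ref{lemma:gs}, a weight-$u$ section is of the form $g\cdot\chi^u$ with $g\in H^0\big(Y,\CO(\lfloor h^*(u)\rfloor)\big)$, that is $\div g+\lfloor h^*(u)\rfloor\geq 0$. Writing $g\chi^u=(g/f)\cdot(f\chi^u)$ and using that $\pi$ is a morphism near $x$ (being a non-contraction point, $x$ avoids the indeterminacy locus of $\pi$), the section $g\chi^u$ generates $\CO(D_h)$ at $x$ if and only if the pulled-back function $g/f$ is a unit at $x$, which occurs exactly when $\ord_P(g/f)=0$, i.e.\ $\ord_P g=-h_P^*(u)=-\lfloor h_P^*(u)\rfloor$. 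This says precisely that the effective divisor $\div g+\lfloor h^*(u)\rfloor$ has coefficient zero at $P$, i.e.\ $g$ represents a member of $|\lfloor h^*(u)\rfloor|$ not passing through $P$. Hence a nonvanishing weight-$u$ section at $x$ exists if and only if $|\lfloor h^*(u)\rfloor|$ is nonempty with no basepoint at $P$, and combining the three steps yields the claimed equivalence.

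The main obstacle is the local analysis in the third paragraph: one must justify that near a non-contraction fixed point the quotient map is a genuine morphism and that non-vanishing of $g\chi^u$ at $x$ is controlled solely by $\ord_P(g/f)$, so that once the weight is fixed to $u$ the fibre direction contributes nothing. The reduction to fixed points via Borel and the reduction to weight $u$ via equivariance of evaluation are comparatively routine.
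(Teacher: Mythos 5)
Your proof is correct and follows essentially the same route as the paper: reduce to torus fixed points via Borel's fixed point theorem, dispose of contraction fixed points with Proposition \ref{prop:bp}, and compare weight-$u$ global sections against the local generator $f\chi^u$ at each non-contraction fixed point using Lemma \ref{lemma:gs}. The paper's version is terser, leaving implicit the $T$-equivariance argument that only the weight-$u$ component of a section matters at $x$, which you spell out.
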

\begin{proof}
	From the discussion above, $x$ is not a basepoint of $D_h$ if and only if $f$ is a global section of $\CO(\lfloor h^*(u)\rfloor )$, where $\ord_P f=-h_P^*(u)$. But this is equivalent to $|\lfloor h^*(u)\rfloor |$ being non-empty with no basepoint at $P$. By Proposition \ref{prop:bp}, we already know that no contraction fixed points are base points. But if the base locus of $D_h$ contains no torus fixed points, then it must be empty.
\end{proof}

We will need a description of a canonical divisor on $X$. Fix a canonical divisor
\[
K_Y=\sum_{P\in Y} a_P\cdot P
\]
on $Y$ and set $b_P=a_P+1$.
Then a canonical divisor on $X$ is given by
\begin{equation}\label{eqn:canonical}
K_X=-\sum_{\rho\in \xray} D_\rho
+\sum_{P,v} \big(\mu(v)\cdot b_P-1\big)\cdot D_{(P,v)},
\end{equation}
see \cite[Theorem 3.21]{divisors}.
Denote the interior of $\Box_h$ by $\Box_h^\circ$.
\begin{prop}\label{prop:canonical}
	Let $D_h$ be an ample Cartier divisor on $X$. Then 
	$f\cdot \chi^u\in \KK(Y)[M]$ is a global section of $\CO(D_h+K_X)$
	if $u\in \Box_h^\circ\cap M$ and 
\[
-\ord_Pf<h_P^*(u)+b_P
\]
for all $P\in Y$.
\end{prop}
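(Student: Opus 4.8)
The plan is to test the global section condition directly against the effectivity of a Weil divisor, rather than trying to realize $D_h+K_X$ as some $D_{h'}$. Concretely, $f\cdot\chi^u$ is a global section of $\CO(D_h+K_X)$ exactly when $\div(f\cdot\chi^u)+D_h+K_X\ge 0$ as a Weil divisor, so I would compute this divisor coefficient by coefficient. For the principal part I use Lemma \ref{lemma:haupt}, for $D_h$ the formula of \cite[Corollary 3.19]{divisors} recalled in \S\ref{sec:divisors}, and for $K_X$ the expression \eqref{eqn:canonical}. Adding the three contributions, the coefficient of a horizontal divisor $D_\rho$ becomes $\langle\rho,u\rangle-h^{\lin}(\rho)-1$, while the coefficient of a vertical divisor $D_{P,v}$ becomes
\[
	\mu(v)\big(\langle u,v\rangle-h_P(v)+\ord_Pf+b_P\big)-1.
\]
It then suffices to show that both families of coefficients are nonnegative under the hypotheses $u\in\Box_h^\circ\cap M$ and $-\ord_Pf<h_P^*(u)+b_P$.

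The horizontal coefficients are the easy case. Since $u$ lies in the \emph{interior} of $\Box_h$, the defining inequalities are all strict, so $\langle\rho,u\rangle>h^{\lin}(\rho)$ for every $\rho\in\xray$. As $D_h$ is Cartier, $h^{\lin}$ is piecewise of the form $\langle\cdot,u'\rangle$ with $u'\in M$, hence $h^{\lin}(\rho)\in\ZZ$; since $\langle\rho,u\rangle\in\ZZ$ as well, the strict inequality improves to $\langle\rho,u\rangle\ge h^{\lin}(\rho)+1$, which is precisely nonnegativity of the horizontal coefficient.

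The vertical coefficients form the crux, and contain the one step requiring care. Because $v$ is a vertex of $\dfan_P$, the definition of $h_P^*$ gives $\langle u,v\rangle-h_P(v)\ge h_P^*(u)$, so the hypothesis forces the bracket $\langle u,v\rangle-h_P(v)+\ord_Pf+b_P$ to be strictly positive. The subtlety is the $-1$ coming from the vertical part of $K_X$: strict positivity of the bracket does not by itself guarantee that $\mu(v)$ times it is $\ge 1$. The decisive point is a denominator argument. Writing $\mu(v)$ for the denominator of $v$ (so $\mu(v)v\in N$), both $\langle u,v\rangle$ and $h_P(v)$ lie in $\tfrac{1}{\mu(v)}\ZZ$ — on the cell through $v$, $h_P$ has the form $\langle\cdot,u'\rangle+a_P$ with $u'\in M$, $a_P\in\ZZ$ — and therefore so does the whole bracket, since $\ord_Pf,b_P\in\ZZ$. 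A \emph{positive} element of $\tfrac{1}{\mu(v)}\ZZ$ is at least $\tfrac{1}{\mu(v)}$, so multiplying by $\mu(v)$ gives $\ge 1$, exactly cancelling the $-1$ and yielding nonnegativity of the vertical coefficient.

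In short, the proposition hinges on the interplay between the strict inequalities in the hypotheses (interior of $\Box_h$ and the strict bound on $-\ord_Pf$) and the integrality structure: strictness is what allows rounding over $\ZZ$ horizontally and over $\tfrac{1}{\mu(v)}\ZZ$ vertically to absorb the $-1$'s built into $K_X$. I expect the denominator rounding in the vertical case to be the only genuinely nontrivial step; everything else is bookkeeping with the divisor formulas already recorded in \S\ref{sec:divisors}.
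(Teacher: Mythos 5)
Your proposal is correct and follows essentially the same route as the paper: compute $\div(f\chi^u)+D_h+K_X$ coefficient by coefficient via Lemma \ref{lemma:haupt}, the formula for $D_h$, and equation \eqref{eqn:canonical}, then check nonnegativity using the interiority of $u$ and the definition of $h_P^*$ as a minimum over vertices. The only difference is that you spell out the integrality/denominator argument (that the vertical bracket lies in $\tfrac{1}{\mu(v)}\ZZ$) which the paper's ``if and only if'' step leaves implicit --- a worthwhile clarification, but not a different proof.
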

\begin{proof}
	Consider any $f\cdot \chi^u\in \KK(Y)[M]$. Then by Lemma \ref{lemma:haupt} and the formula for $K_X$, 
\begin{align*}
\div(f\chi^u)+H+K_X &=
\sum_{\rho\in \xray} \big(\langle \rho, u\rangle -
h^{\lin}(\rho)-1\big)\cdot D_\rho\;+
\\
& \hspace{-3em}+\sum_{(P,\,v)}\mu(v)\,
\big(\langle v,u\rangle + \ord_Pf -h_P(v) + b_P
-\frac{1}{\mu(v)}\big) \cdot D_{P,v}.
\end{align*}

It follows that $\div(f\chi^u)+H+K_X\geq 0$ if and only if
$
\,\langle \rho,u\rangle > h^{\lin}(\rho)\,
$ 
for $\rho\in \xray$ and
$$
\langle u,v\rangle + \ord_Pf -h_P(v) + b_P >0.
$$
The first condition is certainly satisfied if $u\in \Box_h^\circ$. 

The second condition is supposed to be fulfilled for all
	$v\in\dfan_P(0)$. Hence, it may be written as
$$
h_P^*(u) + \ord_Pf + b_P =
\min_{v\in\dfan_P(0)}\big(\langle u,v\rangle -h_P(v)\big)
+ \ord_Pf + b_P >0,
$$
and this just means $\,-\ord_Pf < {h}_P^*(u)+b_P$.
The proposition follows.
\end{proof}
\begin{remark}
	We note that Proposition \ref{prop:canonical} only gives a \emph{sufficient} condition for $f\cdot\chi^u$ to be a global section of $\CO(D_h+K_X)$. The proposition only gives an exact characterization of those sections whose degree lies in $\Box_h^\circ$.
\end{remark}

\begin{example}
	We continue Examples \ref{ex1} and \ref{ex2}. The functions $h_P$ are strictly concave, and $\deg h^*(u)>0$ at the vertices of $\Box_h$. The necessary criteria for a $T$-invariant divisor $D_h$ to be ample given in \S\ref{sec:adjunction} are actually sufficient; see \cite[Corollary 3.28]{divisors} for a precise statement. In any case, in this example, the divisor $H=D_h$ is ample. Nonetheless, $H$ has a basepoint at the torus fixed point $x$ corresponding to the compact cell in $\dfan_0$, or equivalently, the vertex over $(0,0)$ in the graph of $h_{P_0}^*$. Indeed, a local generator for $\CO(H)$ at $x$ must have degree $(0,0)$, see Lemma \ref{lemma:bp}. However, $\CO(H)$ has no global sections of that degree as noted previously.

On the other hand, $\CO(2H)$ does have a global section of degree $(0,0)$, and it follows that $2H$ is basepoint free.
\end{example}

\section{The Realization Polytope}\label{sec:polytope}
Let $\Box\subset M_\QQ$ be a polytope, and 
\[
	\Psi=\sum_{P\in Y} \Psi_P\otimes P,
\]
where each $\Psi_P$ is a piecewise affine concave function $\Psi_P:\Box\to \QQ$, $\deg \Psi(u)>0$ for $u\in \Box^\circ$, and only finitely many $\Psi_P$ are not the constant zero function. The situation we will be primarily interested in is when $\Psi=h^*$, for $h$ the Cartier support function encoding some ample divisor $H=D_h$ on $X$.

\subsection{Global sections}
We are interested in visualizing global sections of the divisor $\Psi(u)$ on $Y$. If $Y$ is not rational, this becomes more subtle, so instead we will initially settle for describing degree zero divisors $F$ on $Y$ such that for fixed $u\in \Box$,
\[
F+\Psi(u)\geq 0.
\]

Let $\cP\subset Y$ be some subset of cardinality $r$ which includes all those $P$ for which $\Psi_P$ is non-trivial.
We denote 
\[
\Q^{\cP}_0:=
\left\{x\in\Q^{\cP}\kst \sum_P x_P=0\right\}.
\]
This is isomorphic to $\QQ^{r-1}$.
Similarly we define the integral version $\Z^{\cP}_0\cong\Z^{r-1}$.

\begin{definition}
Given $\Psi$ as above, we define its \emph{realization} with respect to $\cP$ as the polytope
	\begin{align*}
		\Inc(\Psi)=\left\{(u,x)\in \square\times\Q^{\cP}_0\kst
x_P\leq \Psi_P^*(u)\;\mbox{for } P\in\cP\right\}
\subset M_\Q\times\Q^{\cP}_0.
\end{align*}
\end{definition}
\begin{remark}
	Suppose that $X$ is rational, and $\Psi=h^*$ for $h$ corresponding to an ample divisor $H=D_h$. Then $\Inc(\Psi)$ is just the momentum polytope of the ambient 
toric variety from the well poised embedding of $X$ with respect to $H$
obtained in \cite{wellPoised} by using the Cayley trick, see in particular \cite[\S2.4]{wellPoised}.
\end{remark}
\begin{remark}
We can recover $\Psi$, that is, all functions
$\Psi_P:\square\to\QQ$, from $\Inc(\Psi)$.
\end{remark}

The polytope $\Inc(\Psi)$ comes with a natural projection \[p:\Inc(\Psi)\to \Box.\] We set
\[
	\Inc(\Psi)_u=p^{-1}(u).
\]
This is always an $(r-1)$-dimensional simplex with dilation factor of $\deg \Psi(u)$.

\begin{prop}\label{prop:inc}
For any $u\in \Box\cap M$, there is a bijection between lattice points of $\Inc(\Psi)_u$ and degree zero divisors $F$ with support contained in $\cP$ satisfying
\[F+\Psi(u)\geq 0.\]
Similarly, there is a bijection between interior lattice points of $\Inc(\Psi)_u$ and degree zero divisors $F=\sum c_PP$ with support contained in $\cP$ satisfying
\[c_P+\Psi(u)_P>0\ \forall P\in\cP.\]
\end{prop}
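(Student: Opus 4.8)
The plan is to exhibit both bijections as restrictions of a single affine lattice isomorphism between the fiber $\Inc(\Psi)_u$ and the group of degree-zero divisors supported on $\cP$. Concretely, I would introduce the map
\[
\Phi:\Q^\cP_0\longrightarrow \DivQ Y,\qquad x\longmapsto -\sum_{P\in\cP} x_P\cdot P,
\]
and observe that it is an isomorphism onto the space of $\QQ$-divisors of degree zero supported on $\cP$, restricting to a lattice isomorphism from $\Z^\cP_0$ onto the integral such divisors. The degree-zero condition is literally the defining equation $\sum_P x_P=0$ of $\Q^\cP_0$, so this costs nothing.

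Next I would translate the inequalities. Writing $F=\Phi(x)=\sum_P c_P\cdot P$ with $c_P=-x_P$, and using that both $F$ and $\Psi(u)$ are supported on $\cP$ (recall $\Psi_P\equiv 0$ for $P\notin\cP$), the condition $F+\Psi(u)\geq 0$ reads $c_P+\Psi_P(u)\geq 0$, i.e.\ $x_P\leq\Psi_P(u)$, for every $P\in\cP$. These are exactly the inequalities cutting out $\Inc(\Psi)_u$. Hence $\Phi$ restricts to a bijection between lattice points of $\Inc(\Psi)_u$ and integral degree-zero divisors $F$ supported on $\cP$ with $F+\Psi(u)\geq 0$, which is the first assertion.

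For the second assertion I would pass to the relative interior. Since $\Inc(\Psi)_u$ is a full-dimensional $(r-1)$-simplex inside the $(r-1)$-dimensional space $\Q^\cP_0$ (its dilation factor being $\deg\Psi(u)$, as recorded above the proposition), it has exactly $r$ facets, and the $r$ inequalities $x_P\leq\Psi_P(u)$ are precisely the facet inequalities. Therefore its relative interior is the locus where all of them are strict, namely $\{x\in\Q^\cP_0: x_P<\Psi_P(u)\ \forall P\in\cP\}$. Under $\Phi$ this corresponds exactly to $c_P+\Psi_P(u)>0$ for all $P$, which gives the second bijection.

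The only point requiring genuine care is this relative-interior step: one must know that none of the inequalities $x_P\le\Psi_P(u)$ is redundant, so that each defines an honest facet and strictness of all of them characterizes the relative interior. This is supplied by the already-noted fact that $\Inc(\Psi)_u$ is a nondegenerate $(r-1)$-simplex; alternatively, one checks directly that the $r$ bounding hyperplanes are in general position in $\Q^\cP_0$ (and that the polytope is bounded, which follows since $\sum x_P=0$ together with the upper bounds $x_P\le\Psi_P(u)$ forces two-sided bounds on each coordinate). Everything else is a direct unwinding of definitions, so I expect the argument to be short.
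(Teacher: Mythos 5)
Your proposal is correct and takes essentially the same route as the paper, whose entire proof is the one-line observation that the map $\ZZ_0^\cP\to\Div Y$, $(c_P)\mapsto\sum_{P\in\cP}c_P\cdot P$, induces the desired bijection; you merely make explicit the sign convention $c_P=-x_P$ needed to match the inequalities $x_P\leq\Psi_P(u)$ with $F+\Psi(u)\geq 0$, and the facet/relative-interior bookkeeping for the second bijection.
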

\begin{proof}
	The map $\ZZ_0^\cP\to \Div Y$ sending $(c_P)$ to $\sum_{P\in \cP} c_P\cdot P$ induces the desired bijection.
\end{proof}

\begin{remark}
	If one is interested only in those $F+\Psi(u)$ which are actually in the linear system $|\lfloor \Psi(u)\rfloor |$, we must eliminate those lattice points $(c_P)$ from $\ZZ_0^\cP$ for which $\sum c_P\cdot P$ is not principal; what remains is a sublattice $L$ of ${\ZZ}_0^\cP$. Note that if $Y=\PP^1$, we don't have to change anything.

Those $F$ which are principal determine a global section of $\CO(\Psi(u))$ up to scaling by $\KK^*$. By possibly enlarging $\cP$, we eventually arrive at a spanning set of global sections for $\CO(\Psi(u))$. Note that if $Y=\PP^1$, any set $\cP$ as above with at least two elements will suffice. In light of Lemma \ref{lemma:gs}, $\Inc(h^*)$ is a polytope whose lattice points (with respect to $L$)  give a spanning set for the global sections of $\CO(D_h)$.
\end{remark}

\subsection{The faces of $\Inc(\Psi)$}
The following discussion of the faces of $\Inc(\Psi)$ is not required in the rest of the paper, but is useful for better understanding the structure of the realization polytope.

The projection $p:\Inc(\Psi)\to \Box$ is a special instance of
a projection $p:\nabla \surj \Box $ among two polytopes as it was considered
in \cite[Theorem 3.1]{fiberPolytopes}, see also  \cite[Section 2]{reinerBaues}. The relevant result is a 1-1-correspondence between the so-called
$p$-coherent polyhedral subdivisions of $\Box$, on the one hand, and the
faces of the fiber polytope $\Sigma(\nabla \stackrel{p}{\to}\Box)$ of $p$,
on the other.
The latter is the average fiber (with respect to Minkowski addition).
This is especially easy in our case: it is just a dilation of the
standard simplex~$\Delta$. 

The main idea of this correspondence is, in our
setting, that every pair $(u,f)$ consisting of a point $u\in\square$ and a
face $f\preceq\Delta\cong\Inc(\Psi)_u$ gives rise to a unique face $F\preceq\Inc(\Psi)$
such that $\Inc(\Psi)_u$ intersects its relative interior. 
These ideas lead to a description of the face structure of $\Inc$.

\begin{definition}
\label{def-subdivAI}
For each $I\subseteq\cP$ we denote by $\CA_I$ 
the polyhedral subdivision of $\square$ obtained as the regions of
(affine) linearity of the function 
\[\sum_{P\in I} \Psi_P.\]
\end{definition}

\noindent Note $I=\emptyset$ corresponds to the undivided $\square$, i.e.\
$\CA_\emptyset$ is just the face lattice of $\square$.

\begin{proposition}\label{prop:faces}
The faces of $\Inc(\Psi)$ correspond to pairs
$(I,A)$ with $I\subsetneq\cP$ and $A\in\CA_I$
where the associated face is
$$
F(I,A):=\{(u,x)\in\Inc(\Psi)\kst
u\in A \mbox{\rm\ and } x_P=\Psi_P(u)\mbox{\rm\ for } P\in I\}.
$$
All pairs $(I,A)$ and $(I',A)$ will be identified whenever
$\deg(\Psi_{|A})=0$.
\end{proposition}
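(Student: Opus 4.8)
Proposition \ref{prop:faces} — proof plan.

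The plan is to establish the correspondence by transporting the general fiber-polytope machinery of Billera–Sturmfels to our concrete situation, where the fiber polytope is just a dilated standard simplex. Let me sketch how I would organize this.

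First I would set up the correspondence via the theory cited from \cite{fiberPolytopes}. The projection $p:\Inc(\Psi)\to\square$ realizes $\Inc(\Psi)$ as a polytope over the base $\square$, and the key structural fact is that faces of $\Inc(\Psi)$ are governed by $p$-coherent subdivisions of $\square$ together with face data of the generic fiber $\Delta$. Concretely, I would argue that every face $F\preceq\Inc(\Psi)$ is cut out by selecting a subset $I\subsetneq\cP$ of the indices along which the defining inequality $x_P\leq\Psi_P(u)$ is made tight, and a cell $A$ of the base over which this tightening is compatible. Setting the constraints $x_P=\Psi_P(u)$ active exactly for $P\in I$ forces $u$ to range over a region where $\sum_{P\in I}\Psi_P$ is affine-linear — precisely a cell of the subdivision $\CA_I$ from Definition \ref{def-subdivAI}. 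This is the crux of why the subdivisions $\CA_I$ appear.

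Next I would verify that $F(I,A)$ as defined is genuinely a face, and that the assignment $(I,A)\mapsto F(I,A)$ is surjective onto the faces of $\Inc(\Psi)$. For the face property, I would exhibit a supporting linear functional: a suitable combination of the active inequalities $\Psi_P(u)-x_P\geq 0$ for $P\in I$ (whose common zero locus is tight exactly on $F(I,A)$) together with a functional supporting $A$ in the base $\square$. For surjectivity, given any face $F$, I would read off $I$ as the set of indices where the fiber-defining inequality is identically tight on $F$, and $A$ as the projection $p(F)$, then check $A\in\CA_I$. Here I would use that $\Inc(\Psi)_u$ is an $(r-1)$-simplex of dilation $\deg\Psi(u)$ (established just before Proposition \ref{prop:inc}), so the facet structure of each fiber is controlled by the $r$ inequalities indexed by $\cP$, and a face of the total polytope is determined fiberwise by which of these become active.

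The main obstacle — and the subtlest point — is the identification clause: pairs $(I,A)$ and $(I',A)$ coincide when $\deg(\Psi_{|A})=0$. The difficulty is that when the fiber over $A$ has degenerated (the simplex $\Inc(\Psi)_u$ collapses to a point because its dilation factor $\deg\Psi(u)$ vanishes), all the defining inequalities become simultaneously tight, so the distinction between which subset $I$ we declared "active" is lost. I would handle this by analyzing the degenerate locus directly: when $\deg(\Psi_{|A})=0$ on a cell $A$, the fiber $\Inc(\Psi)_u$ is a single point, forcing $x_P=\Psi_P(u)$ for \emph{all} $P\in\cP$ regardless of $I$, so $F(I,A)=F(I',A)$ as subsets of $\Inc(\Psi)$. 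I would then confirm that away from this degenerate locus the data $(I,A)$ is uniquely recovered from the face, completing the bijection. The condition $I\subsetneq\cP$ (strict) is the combinatorial shadow of the fact that one cannot make \emph{all} fiber inequalities strictly active without collapsing the fiber, matching the fact that a proper face of a simplex omits at least one facet.
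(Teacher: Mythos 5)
Your plan takes essentially the same route as the paper: the paper's entire proof is the single sentence ``This is a special case of \cite[Theorem 3.1]{fiberPolytopes}'', i.e.\ precisely the Billera--Sturmfels fiber-polytope correspondence for $p:\Inc(\Psi)\to\square$ that you invoke and then unwind, including your treatment of the degenerate fibers for the identification clause. One small caution should you write out the supporting-functional step: the constraints $x_P\le\Psi_P(u)$ are not linear in $(u,x)$ (each is an intersection of linear inequalities, as $\Psi_P$ is piecewise affine and concave), so the functional exposing $F(I,A)$ should be $\sum_{P\in I}x_P+\langle\phi,u\rangle$ for a suitable $\phi\in M_\QQ$, using the concavity of $\sum_{P\in I}\Psi_P$ and the definition of $\CA_I$, rather than a ``combination of the active inequalities'' themselves.
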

\begin{proof}
	This is a special case of \cite[Theorem 3.1]{fiberPolytopes}.
\end{proof}

\begin{remark}[Facets and divisors]
Facets of $\Inc(\Psi)$ correspond to either facets $A\prec \Box$ on which $\deg \Psi_{|A}\neq 0$ (via the choice $I=\emptyset$), or to facets of the graph of $\Psi_P$ for some $P\in\cP$ (via the choice $I=\{P\}$).

When $\Psi=h^*$ for some ample divisor $D_h$ on $X=\tv(\dfan,\marking)$, facets of $\Inc(\Psi)$ are in bijection with $T$-invariant prime divisors of $X$. A facet $A\prec \Box$ corresponds to $D_\rho$, where $\rho$ is the inward normal vector to $A$; the condition $\deg \Psi_{|A}\neq 0$ ensures that $\rho\in\xray$. A facet of the graph of $\Psi_P$ corresponds to $D_{P,v}$, where $(v,-1)$ is the normal vector to this facet.
\end{remark}

\begin{remark}[Torus fixed points]
	When $\Psi=h^*$ for some ample divisor $D_h$ on $X=\tv(\dfan,\marking)$, the torus fixed points of $X$ can be identified with special faces of $\Inc(\Psi)$ via Proposition \ref{prop:faces}. Indeed, contraction fixed points of $X$ correspond to vertices $u$ of $\Box$ with $\deg \Psi(u)=0$. These can be thought of as vertices of $\Inc(\Psi)$ which are the only element in the respective fiber over $\Box$. 

Contraction-free fixed points of $X$ correspond to vertices $(u,\Psi_P(u))$ of the graph of some $\Psi_P$. These correspond to faces of $\Inc(\Psi)$ which are also facets of $\Inc(\Psi)_u$ for some $u\in \Box$. 
\end{remark}

\section{Local Calculations}\label{sec:local}
In this section we let $X=\tv(\dfan,\marking)$, and take $H=D_h$ to be an ample divisor on $X$. We use the canonical divisor $K_X$ as described in equation \eqref{eqn:canonical}. 

For any vertex $\bfv=(v,h_P^*(v))$
of the graph of $h_P^*$ with $\deg h^*(v)>0$, we let $C_\bfv$ be the cone in $M_\QQ\times \QQ$ generated by 
\[
(u,k)-\bfv,\qquad u\in \Box,\quad k\leq h_P^*(u).
\]
Let $t$ be a local parameter of $Y$ at $P$, and let $x\in X$ be the contraction-free fixed point corresponding to $\bfv$. 
An argument similar to that used in \cite[Proposition 2.6]{singularities} shows that sending $\chi^{(0,-1)}$ to $t$ induces an isomorphism between an \'etale neighborhood of $x$ in $X$
and 
\[
	U_{\bfv}=	\spec \KK[C_\bfv\cap (M\times \ZZ)].
	\]
Let 
\[\bfw_1=(w_1,\alpha_1),\ldots,\bfw_n=(w_n,\alpha_n)\in M\times \ZZ\]
denote the primitive generators of the rays of $C_\bfv$.

\begin{lemma}\label{lemma:gorenstein}
Assume that $X$ is Gorenstein at $x$. A local generator for $\CO(K_X)$ at $x$ can be represented by a function of the form  $f\chi^u$, where
\begin{align*}
u&=\sum \lambda_i w_i\\
\ord_P f&=-\alpha-b_P\qquad \alpha=\sum \lambda_i \alpha_i
\end{align*}
for some $0\leq \lambda_i\leq 1$ with the set
\[
\{
\bfw_i\ |\ \lambda_i\neq 0
\}
\]
linearly independent.
Furthermore, $\alpha$ is the largest integer such that for the $u$ given above,
\[
\bfu=(u,\alpha)\in C_\bfv^\circ.
\]
	\end{lemma}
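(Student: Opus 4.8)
The plan is to transport the computation to the affine toric variety $U_\bfv=\spec\KK[C_\bfv\cap(M\times\ZZ)]$ and read off the canonical generator there. Under the étale isomorphism sending $\chi^{(0,-1)}$ to $t$, a semi-invariant rational function $f\cdot\chi^u$ with $f\in\KK(Y)$ becomes, up to a local unit, the monomial $\chi^{(u,-\ord_Pf)}$: indeed $f=(\text{unit})\cdot t^{\ord_Pf}$ in $\CO_{Y,P}$ and $t^{\ord_Pf}=\chi^{(0,-\ord_Pf)}$. Since $X$ is Gorenstein at $x$, the sheaf $\CO(K_X)$ is invertible near $x$ and, being $T$-invariant, has a local generator of the form $f\cdot\chi^u$. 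So the first task is to pin down the corresponding lattice point $(u,-\ord_Pf)\in M\times\ZZ$.

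To do this I would compute $\div(f\chi^u)+K_X$ from Lemma \ref{lemma:haupt} together with the formula \eqref{eqn:canonical} for $K_X$, and require that it vanish along every invariant prime divisor through $x$. These prime divisors are exactly the ones visible in $U_\bfv$, i.e.\ they correspond to the facets of $C_\bfv$: a horizontal divisor $D_\rho$ gives the facet normal $(\rho,0)$, while a vertical divisor $D_{P,v}$ gives the primitive facet normal $(\mu(v)\cdot v,-\mu(v))$. Setting $\alpha:=-\ord_Pf-b_P$ (this is the point of the $b_P$-twist, which accounts for the canonical of the base $Y$ along the fibre), a direct check shows the vanishing conditions collapse to $\langle\nu,(u,\alpha)\rangle=1$ for every facet normal $\nu$ of $C_\bfv$. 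In other words, $\bfu=(u,\alpha)$ must be the Gorenstein point of $C_\bfv$, whose existence is guaranteed precisely by the Gorenstein hypothesis; this already forces $\ord_Pf=-\alpha-b_P$.

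The maximality of $\alpha$ in the ``furthermore'' clause is then immediate. From $\langle\nu,\bfu\rangle=1>0$ on all facets we get $\bfu\in C_\bfv^\circ$, and since the downward ray $(0,-1)$ lies in $C_\bfv$, the points $(u,k)$ with $k\le\alpha$ all stay in the interior. On the other hand any vertical facet normal $\nu=(\mu(v)\cdot v,-\mu(v))$ satisfies $\langle\nu,(0,1)\rangle=-\mu(v)\le-1$, so $\langle\nu,(u,\alpha+1)\rangle\le0$ and $(u,\alpha+1)\notin C_\bfv^\circ$; such a facet exists because the graph of $h_P^*$ has a genuine facet through $\bfv$. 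Hence $\alpha$ is the largest integer with $(u,\alpha)\in C_\bfv^\circ$.

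The remaining, and most delicate, point is the representation $\bfu=\sum\lambda_i\bfw_i$ with $0\le\lambda_i\le1$ and the nonzero $\bfw_i$ linearly independent. I would prove the following more flexible statement by induction on $\dim C_\bfv$: any lattice point $z$ of a pointed rational cone with $\langle\nu,z\rangle\le1$ for all facet normals $\nu$ admits such a representation. Starting from a Carathéodory expression $z=\sum_{i\in S}\lambda_i\bfw_i$ over a linearly independent set $S$ of rays, if some $\lambda_{i_0}>1$ then $z-\bfw_{i_0}$ still lies in the cone, forcing $\langle\nu,\bfw_{i_0}\rangle\le1$ on every facet; the facets where this equals $1$ cut out a proper face $G$ containing $z-\bfw_{i_0}$, and since $\bfw_{i_0}\notin\spann G$ one applies the inductive hypothesis to $z-\bfw_{i_0}$ inside $G$ and appends $\bfw_{i_0}$ with coefficient $1$. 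Applied to the Gorenstein point $\bfu$ (for which $\langle\nu,\bfu\rangle=1$), this yields the desired $\lambda_i$. This combinatorial lemma is the main obstacle: obtaining the bound $\lambda_i\le1$ rather than merely $\lambda_i\ge0$ is exactly where the facet-level-one (Gorenstein) property enters essentially, and the fact that the bound can be attained shows no cruder estimate suffices.
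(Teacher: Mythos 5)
Your proposal is correct, and its skeleton matches the paper's: both pass to the local toric model $U_\bfv$ and identify $\bfu=(u,\alpha)$ with the Gorenstein point of $C_\bfv$. The differences lie in the two sub-steps. To pin down $\bfu$, you compute $\div(f\chi^u)+K_X$ directly from Lemma \ref{lemma:haupt} and \eqref{eqn:canonical}, arriving at the characterization $\langle\nu,\bfu\rangle=1$ for every primitive facet normal $\nu$ of $C_\bfv$; the paper instead transports $K_X$ to $-B+\div(\chi^{(0,-b_P)})$ and characterizes $\bfu$ by $C_\bfv^\circ\cap(M\times\ZZ)=\bfu+(C_\bfv\cap(M\times\ZZ))$. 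These are equivalent, and both give the ``furthermore'' clause in the same way. For the decomposition $\bfu=\sum\lambda_i\bfw_i$, your Carath\'eodory-plus-facet-pairing argument is genuinely different from the paper's smooth/simplicial/triangulate reduction, and is arguably the more robust of the two: since $\langle\nu,\bfu\rangle=1$ on every facet and each ray generator $\bfw_i$ lies off at least one facet (pointedness), any conic Carath\'eodory representation automatically satisfies $\lambda_i\leq \langle\nu,\bfu\rangle/\langle\nu,\bfw_i\rangle\leq 1$, so in the case at hand you never need the inductive step, which only handles the impossible case $\lambda_{i_0}>1$. (If you keep the induction for the more flexible ``level $\leq 1$'' statement, you should also verify that the hypothesis is inherited by the face $G$ --- it is, because the primitive facet normals of $G$ with respect to the induced lattice are positive rational multiples of restrictions of facet normals of $C_\bfv$ --- and that at least one facet pairs to $1$ with $\bfw_{i_0}$, so that $G$ is proper.) By contrast, the paper's reduction to the simplicial case via triangulation is the sketchier step, since $\bfu$ need not be the Gorenstein point of the simplicial subcone containing it; your facet-pairing estimate sidesteps this entirely.
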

	\begin{proof}
		Under the \'etale map identifying $(X,x)$ and $U_{\bfv}$, $K_X$ maps to the divisor
		\[
			-B+\div (\chi^{(0,-b_P)}),
		\]
		where $B$ is the toric boundary of $U_{\bfv}$. Furthermore,
		\[
			B=\div \chi^{\bfu},
		\]
	with $\bfu$ the unique interior lattice point of $C_\bfv$ such that 
\begin{equation}\label{eqn:u}
C_\bfv^\circ\cap (M\times \ZZ)=\bfu+(C_\bfv\cap(M\times \ZZ)).
\end{equation}
When $C_\bfv$ is smooth, $\bfu=\sum \bfw_i$. When $C_\bfv$ is simplicial (but not smooth), $\bfu=\sum \lambda_i\bfw_i$ for some $0<\lambda_i\leq 1$ with $\sum \lambda_i <\dim X$. For arbitrary $C_\bfv$, we may triangulate $C_\bfv$ to reduce to the simplicial case, and obtain again $\bfu=\sum \lambda_i\bfw_i$ for some $0\leq \lambda_i\leq 1$ with 
the set 
$\{
\bfw_i\ |\ \lambda_i\neq 0
\}$
linearly independent.
The first claim now follows by pulling back along the \'etale map.
For the second claim, we use the characterization of $\bfu$ (and thus $\alpha$) by \eqref{eqn:u}, and the fact that $(0,-1)\in C_\bfv$.
\end{proof}

\begin{figure}
	\begin{tikzpicture}
	\draw [lightgray, fill=lightgray] (0,2) -- (2,2) -- (4,0) -- (5,-1) -- (0,-1) -- (0,2);
	\draw [gray, thin, dashed] (0,-1) grid (4,3);
	\draw [ultra thick] (0,1) -- (1,2) -- (2,2) -- (3,1) -- (4,-1); 
	\draw [fill] (1,2)  circle [radius=.07];
		\draw [fill] (2,2)  circle [radius=.07];
		\draw [fill] (3,1)  circle [radius=.07];
		\draw [fill] (2,1)  circle [radius=.07];
		\node [above right] at (2,2) {$\bfv$};
		\node [above] at (1,2) {$\bfw_1+\bfv$};
		\node [above right] at (3,1) {$\bfw_2+\bfv$};
		\node [below] at (2,1) {$\bfu+\bfv$};
	\end{tikzpicture}

	\caption{A local generator for $\CO(K_X)$}\label{fig:gorenstein}
\end{figure}

\begin{example}
	Consider Figure \ref{fig:gorenstein}. The black line depicts the graph of some $h_P^*$, with $\bfv=(v,h_P^*(v))$ a vertex of this graph. The gray region is the translate $\bfv+C_\bfv$, and $\bfw_1,\bfw_2$, and $\bfu$ are as depicted.
In this example, $\bfu=\bfw_1+\bfw_2$  ($C_\bfv$ is a smooth cone), $u=v$, and $\alpha=-1$.
\end{example}

\section{Freeness}\label{sec:fujita}
Let $X=\tv(\dfan,\marking)$ be a Gorenstein complexity-one $T$-variety of dimension $d$, and $H=D_h$ an ample $T$-invariant Cartier divisor. 
As in \S\ref{sec:adjunction}, we have fixed a canonical divisor $K_Y=\sum a_P P$ on $Y$, which gives rise to a canonical divisor $K_X$ on $X$. We let $E=\sum e_P P$ be any divisor of degree $2g$ on $Y$, where $g$ is the genus of $Y$.
We take $\cP$ to be any subset of $Y$ containing all $P$ for which $h_P^*$ is non-trivial, and those in the support of $K_Y,E$.
Set
\[
b_P'=b_P-e_P=a_P+1-e_P
\]
for $p\in\cP$.
It follows that $\sum b_P' =r-2$.

Let $\bfv=(v,h_P^*(v))$ be a vertex of the graph of $h_P^*$ corresponding to a contraction-free fixed point $x$.
We let $(u,\alpha)$, $(w_i,\alpha_i)$ be as in the statement of Lemma \ref{lemma:gorenstein}.

\begin{prop}\label{prop:section}
Fix $m\geq d+1$ and set $\Psi=(mh)^*+\sum b_P'\otimes P$.
\begin{enumerate}
	\item\label{item1} The lattice point $u+mv$ is in the interior of $m\cdot \Box_h$.
	\item\label{item2} There is a lattice point in the interior of $\Inc(\Psi)_{u+mv}$.
\end{enumerate}
\end{prop}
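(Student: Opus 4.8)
The plan is to extract both statements from the convex-combination description of $\bfu$ in Lemma \ref{lemma:gorenstein}, feeding it into the convexity of $\Box_h$ for part \eqref{item1} and into the concavity of $\deg h^*$ for part \eqref{item2}. Throughout I write $\Lambda=\sum_i\lambda_i$ and recall that the $\bfw_i$ with $\lambda_i\neq 0$ are linearly independent in $M_\QQ\times\QQ\cong\QQ^{d}$, so that $\Lambda\le d<m$.

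First I would set up the geometry of $C_\bfv$. Since $C_\bfv$ is the tangent cone at $\bfv$ of the region under the graph of $h_P^*$, each ray $\bfw_i$ spans an edge of this (integral) polyhedron emanating from $\bfv$; as $\bfw_i$ is primitive and the edge either is unbounded or runs to the next integral vertex, the point $\bfv+\bfw_i$ lies in the region under the graph, and hence $v+w_i\in\Box_h$. Moreover the projection $M_\QQ\times\QQ\to M_\QQ$ maps $C_\bfv$ onto the tangent cone $T_v$ of $\Box_h$ at $v$, and being open it sends the interior point $\bfu\in C_\bfv^\circ$ to a point $u\in T_v^\circ$. Now write
\[
v+\tfrac1m u=\sum_i\tfrac{\lambda_i}{m}(v+w_i)+\big(1-\tfrac{\Lambda}{m}\big)v .
\]
As $\Lambda<m$, this is a convex combination of points of $\Box_h$ with strictly positive weight on $v$. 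Testing against a facet inequality $\langle\rho,\cdot\rangle\ge h^{\lin}(\rho)$ of $\Box_h$: if $\rho$ is tight at $v$ the inequality becomes strict because $\langle\rho,u\rangle>0$ (as $u\in T_v^\circ$); if $\rho$ is not tight at $v$ it becomes strict because $v$ has positive weight and satisfies it strictly. Hence $v+\tfrac1m u\in\Box_h^\circ$, and multiplying by $m$ gives part \eqref{item1}.

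For part \eqref{item2}, Proposition \ref{prop:inc} identifies the interior lattice points of $\Inc(\Psi)_{u+mv}$ with degree-zero divisors $\sum_P c_PP$ on $\cP$ satisfying $c_P>-\Psi_P(u+mv)$, and such integers exist exactly when $\sum_P\lceil\Psi_P(u+mv)\rceil\ge r$. Since $\sum_P\lceil\Psi_P(u+mv)\rceil\ge\lceil\deg\Psi(u+mv)\rceil$, it suffices to prove the strict inequality
\[
\deg\Psi(u+mv)=m\,\deg h^*\!\big(v+\tfrac1m u\big)+(r-2)>r-1,
\]
equivalently $m\,\deg h^*(v+\tfrac1m u)>1$. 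Here I would again invoke the convex combination above, now using that $\deg h^*$ is concave, nonnegative on $\Box_h$ and positive in its interior, to get $m\,\deg h^*(v+\tfrac1m u)\ge(m-\Lambda)\deg h^*(v)+\sum_i\lambda_i\deg h^*(v+w_i)$, together with the sharper vertex estimate $h_P^*(v+\tfrac1m u)\ge h_P^*(v)+\tfrac{\alpha}{m}$ that holds because $\bfv+\tfrac1m\bfu$ lies under the graph of $h_P^*$.

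The main obstacle is precisely this strict inequality. Part \eqref{item1} only gives $\deg h^*(v+\tfrac1m u)>0$, hence $\deg\Psi(u+mv)>r-2$, which is one short; and discarding the nonnegative terms $\deg h^*(v+w_i)$ leaves $(m-\Lambda)\deg h^*(v)$, which in the extremal case $m=d+1$, $\Lambda=d$ (a smooth cone $C_\bfv$) can equal $1$ exactly, e.g.\ when $\deg h^*(v)=1$. Closing this final gap is where the hypothesis $m\ge\dim X+1$ is genuinely used: one must retain the strictly positive contribution of the interior point $v+\tfrac1m u$, i.e.\ use the convexity of the realization polytope rather than bare positivity, to force the degree strictly past $r-1$. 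I expect this to be the technical heart of the argument, with part \eqref{item1} and the reduction in \eqref{item2} being comparatively formal.
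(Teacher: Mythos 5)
Part (\ref{item1}) of your proposal is correct and is essentially the paper's own argument: both rest on $v+w_i\in\Box_h$ (via integrality of the vertices of the subgraph of $h_P^*$), on $\lambda:=\sum_i\lambda_i\le d<m$, and on the fact that $u$ lies in the interior of the image of $C_\bfv$ under the projection to $M_\QQ$, i.e.\ in the interior of the tangent cone of $\Box_h$ at $v$. The paper phrases this as ``$\varepsilon u+mv$ is interior for small $\varepsilon>0$, and the open segment from $mv$ to $\frac{\lambda+1}{\lambda}u+mv$ lies in the interior''; your facet-by-facet test is the same computation in different packaging.

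Part (\ref{item2}) has a genuine gap, which you in fact flag yourself. The reduction to $\sum_P\lceil\Psi_P(u+mv)\rceil\ge r$ is correct, but the further reduction to the degree bound $\deg\Psi(u+mv)>r-1$ is a dead end: concavity only yields $\deg\Psi(u+mv)>r-2$, and the stronger bound is not a consequence of the hypotheses (for $Q\ne P$ the values $h_Q^*(v)$ need not be integers, and $\deg h^*(v)$ can be an arbitrarily small positive rational; cf.\ the non-integral values in Example \ref{ex2}). Your proposed remedy --- extracting the missing unit of degree from the interior position of $v+\frac1m u$ --- is not how the paper closes the gap and cannot work in general. The paper instead drops one dimension: letting $S_i$ be the facet of $\Inc(\Psi)_{mv+\lambda w_i}$ on which the coordinate $z_P$ is maximal, the weighted combination $S=\sum_i\frac{\lambda_i}{\lambda}S_i$ is an $(r-2)$-simplex contained in $\Inc(\Psi)_{u+mv}$, lying in the hyperplane $z_P=\Psi_P(mv)+\alpha$ --- an \emph{integral} hyperplane precisely because $X$ is Gorenstein ($\alpha\in\ZZ$) and $h_P^*(v)\in\ZZ$ --- and of dilation $\sum_i\frac{\lambda_i}{\lambda}\deg\Psi(mv+\lambda w_i)>r-2$, which follows from concavity of $\deg\Psi$ together with $\deg\Psi(mv)>r-2$ and $\deg\Psi(mv+(\lambda+1)w_i)\ge r-2$. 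An $(r-2)$-simplex of dilation exceeding $r-2$ in an integral hyperplane always contains a relative interior lattice point, and since $\Psi_P(u+mv)$ strictly exceeds the integer $\Psi_P(mv)+\alpha$, that point is interior in the full fiber $\Inc(\Psi)_{u+mv}$. Equivalently: the extra $+1$ you are missing is gained in the single coordinate $z_P$, whose ceiling beats the integer $\Psi_P(mv)+\alpha$ by a full unit, rather than in the total degree. This integrality-plus-convexity mechanism is indeed the technical heart you anticipated, but it operates on one coordinate, not on $\deg\Psi$.
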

\begin{proof}
We first claim that there is some $\varepsilon>0$ such that $\varepsilon u+mv$ is in the interior of $m\cdot \Box_h$. If $mv$ is in the interior of $m\cdot \Box_h$ this is immediate. For the case when $mv$ is in the boundary of $m\cdot \Box_h$, we argue as follows.
		By Lemma \ref{lemma:gorenstein}, $u$ is in the interior of the projection $\overline C$  of $C_\bfv$ to $M_\QQ$. 
Since $mv+\overline C$ agrees with $m\cdot\Box_h$ in an open neighborhood of $mv$, the
claim follows.

We now show part \ref{item1} of the proposition.
Let $\lambda_i$ be as in Lemma \ref{lemma:gorenstein}, and set $\lambda=\sum \lambda_i$. Note that $\lambda+1\leq m$, so $m\bfv+(\lambda+1) \bfw_i$ is a point on an edge of the graph of $(mh)^*_P$, and thus 
\[
mv+(\lambda+1) w_i\in m\cdot \Box_h.
\]
Since 
\[
	\frac{\lambda+1}{\lambda}u+mv=\sum \frac{\lambda_i}{\lambda} \left((\lambda+1)w_i+mv\right),
\]
and $\sum \lambda_i/\lambda=1$,
$\frac{\lambda+1}{\lambda}u+mv$ is in $m\cdot \Box_h$. By convexity, so is the line segment from $mv$ to $\frac{\lambda+1}{\lambda}u+mv$. Furthermore, since $\varepsilon u+mv$ is in the interior of $m\cdot \Box_h$, the interior of this segment is in the interior of $m\cdot \Box_h$. In particular, $u+mv$ is in the interior of $m\cdot \Box_h$, proving claim \ref{item1}.

We move on to part \ref{item2} of the proposition.
For each $i$ with $\lambda_i\neq 0$, let $S_i$  consist of those points  \[z\in \Inc(\Psi)_{mv+\lambda w_i}\] with 
$z_P=\Psi_P(mv+\lambda w_i)$. By construction, each $S_i$ is an $(r-2)$-simplex contained in a hyperplane of $\QQ_0^\cP$ parallel to the hyperplane $z_P=0$, where $r=\#\cP$.
We claim that $S_i$ has dilation strictly larger than $(r-2)$.
Indeed, 
\[
\deg \Psi=\deg (mh)^*+(r-2)\geq r-2
\]
so since $mv+(\lambda+1)w_i\in m\cdot \Box_h$, we have
\[\deg \Psi(mv+(\lambda+1) w_i)\geq r-2.\] Furthermore, since $\deg(mh)^*(mv)>0$ ($x$ was a contraction-free fixed point), it follows that $\deg \Psi(mv)>  r-2$. By the concavity of $\deg \Psi$, $\deg \Psi(mv+\lambda w_i)>  r-2$. But $\deg \Psi(mv+\lambda w_i)$ is the dilation of $S_i$, proving the claim.

Set \[S=\Inc(\Psi)_{u+mv}\cap \conv \{S_i\ |\ \lambda_i\neq 0\}.\]
Here, the convex hull is taken inside the space $M_\QQ\times \QQ^\cP_0$.
This is a non-empty simplex, since (by the linear independence of the $w_i$)
\[
	S=
\sum\frac{\lambda_i}{\lambda} S_i.
\]
In fact, since each $S_i$ has dilation larger than $(r-2)$, so does $S$.
Furthermore, the $z_P$ coordinate of any point $z\in S$ is
\[
\sum \frac{\lambda_i}{\lambda}\Psi_P(mv+\lambda w_i)=\Psi_P(mv)+\sum\lambda_i\alpha_i=\Psi_P(mv)+\alpha\in\ZZ
\]   
so $S$ is an $(r-2)$-simplex in an integral hyperplane of $\QQ_0^\cP$ with dilation  larger than $(r-2)$. Hence, $S$ contains an interior lattice point, necessarily contained in the interior of $\Inc(\Psi)_{u+mv}$. This proves claim~\ref{item2}.
\end{proof}

\begin{cor}\label{cor:degree}
Let $m\geq d+1$.
The divisor 
\[
A=\sum_Q\left( \lceil(mh)_Q^*(u+mv)\rceil+b_Q-1\right) \cdot Q
\]
has degree at least $2g$.
\end{cor}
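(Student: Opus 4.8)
The plan is to deduce the corollary directly from part~\ref{item2} of Proposition~\ref{prop:section} by converting the interior lattice point it provides into a degree-zero divisor on $Y$, and then reading off the degree bound from the resulting system of inequalities. First I would apply Proposition~\ref{prop:section}\ref{item2} with $\Psi=(mh)^*+\sum b_P'\otimes P$: it guarantees a lattice point in the interior of $\Inc(\Psi)_{u+mv}$. By the second statement of Proposition~\ref{prop:inc}, such an interior lattice point corresponds to a degree-zero divisor $F=\sum_{P\in\cP}c_P\cdot P$, so that $\sum_{P\in\cP}c_P=0$, satisfying
\[
c_P+\Psi_P(u+mv)=c_P+(mh)_P^*(u+mv)+b_P'>0
\]
for every $P\in\cP$.

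Next I would turn each of these strict inequalities into an integer lower bound on the coefficients of $A$. Substituting $b_P'=b_P-e_P$ and rearranging, the inequality becomes $(mh)_P^*(u+mv)>e_P-b_P-c_P$, whose right-hand side is an \emph{integer}. Since the ceiling of a rational number strictly exceeding an integer $n$ is at least $n+1$, this gives
\[
\lceil (mh)_P^*(u+mv)\rceil+b_P-1\geq e_P-c_P
\]
for all $P\in\cP$, and the left-hand side is exactly the coefficient of $P$ in $A$.

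Finally I would sum over $\cP$. For $Q\notin\cP$ the function $h_Q^*$ is trivial and $a_Q=0$ (as $\cP$ contains the support of $K_Y$), so the coefficient of $Q$ in $A$ equals $\lceil 0\rceil+b_Q-1=0$; hence $\deg A=\sum_{P\in\cP}\big(\lceil (mh)_P^*(u+mv)\rceil+b_P-1\big)$. Summing the inequalities of the previous step and invoking $\sum_{P\in\cP}c_P=0$ together with $\sum_{P\in\cP}e_P=\deg E=2g$ yields
\[
\deg A\geq\sum_{P\in\cP}(e_P-c_P)=\deg E-\deg F=2g,
\]
which is the claim.

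As for the difficulty: essentially all the geometric and combinatorial content has already been absorbed into Proposition~\ref{prop:section}\ref{item2}---the existence of the interior lattice point, which itself rests on the convexity estimate and the normalization $\sum b_P'=r-2$. Granting that, the corollary is a bookkeeping argument, and the only steps that demand genuine care are the integrality upgrade (one must confirm that $e_P-b_P-c_P\in\ZZ$ so that the strict inequality promotes to $\lceil\cdot\rceil\geq e_P-b_P-c_P+1$) and the verification that the points outside $\cP$ contribute nothing to $\deg A$.
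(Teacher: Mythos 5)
Your proposal is correct and follows essentially the same route as the paper: invoke Proposition~\ref{prop:section}\ref{item2} to get an interior lattice point of $\Inc(\Psi)_{u+mv}$, translate it via Proposition~\ref{prop:inc} into a degree-zero divisor $\sum c_P P$ with $c_P+(mh)_P^*(u+mv)+b_P'>0$, use integrality of $c_P$ and $b_P'$ to upgrade the strict inequality to the ceiling bound, and sum, with $E$ contributing the $2g$. The explicit check that points outside $\cP$ contribute nothing to $\deg A$ is left implicit in the paper but is a worthwhile addition.
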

\begin{proof}
By Proposition \ref{prop:section}, there exists a lattice point $\bfu'$ in the interior of $\Inc(\Psi^*)$ with $p(\bfu')=u+mv$.
By Proposition \ref{prop:inc}, this means that there is a degree zero divisor $\sum c_P P$ on $Y$ such that 
\[
c_Q+\Psi_Q(u+mv)>0
\]
for all $Q\in \cP$,  that is,
\[
c_Q+\lceil(mh)_Q^*(u+mv)\rceil -1+b_Q'\geq 0.
\]
Thus, the divisor
\[
\sum \left(\lceil(mh)_Q^*(u+mv)\rceil -1+b_Q'\right)\cdot Q
\]
has non-negative degree.
Adding $E$, we obtain that
\begin{align*}
A=\sum_Q \left(\lceil(mh)_Q^*(u+mv)\rceil -1+b_Q'+e_Q\right)\cdot Q\\=\sum_Q \left(\lceil(mh)_Q^*(u+mv)\rceil+b_Q-1\right)\cdot Q
\end{align*}
has degree at least $2g$.
\end{proof}

We will now prove the freeness theorem: 
\begin{proof}[Proof of Theorem \ref{thm:main}]
Using notation established at the beginning of this section, we must show that no torus fixed point $x$ is in the base locus of $K_X+mH$, so long as $m\geq d+1$.
Since $X$ only has rational Gorenstein singularities by assumption, $K_X+mH$ is nef by \cite[Theorem 1]{fujita:87a}. But then by Proposition \ref{prop:bp}, no contraction fixed point is in the base locus of $K_X+mH$. Hence, in the following, we must only consider contraction-free fixed points. 

Let $x$ be a contraction-free fixed point of $X$; this corresponds to a vertex $\bfv$ of the graph of $h_P^*$.
By Corollary \ref{cor:degree}, the divisor 
$A=\sum_Q \left(\lceil(mh)_Q^*(u+mv)\rceil+b_Q-1\right)\cdot Q$ has degree at least $2g$, and hence is basepoint free on $Y$. In particular, $\CO(A)$  has a global section $f$ with
\[
\ord_P f+b_P+ \lceil(mh)_P^*(u+mv)\rceil-1=0.
\] 
This section $f$ satisfies
\[
\ord_Q f+(mh)_Q^*(u+mv)+b_Q>0
\]
for all $Q$, so by Proposition \ref{prop:canonical}, $f\chi^{u+mv}$ is a global section of $\CO(K_X+mH)$. 

On the other hand, using Lemma \ref{lemma:gorenstein}, a local generator for $\CO(K_X+mH)$ at $x$ has degree $u+mv$ and vanishing order $-\alpha-b_P-(mh)_P^*(mv)$ at $P$. But $\alpha$ is the largest integer such that $(u,\alpha)$ is contained in $C_\bfv^\circ$.
At the vertex $m\bfv$ of the graph of $(mh)_P^*$, the edges emanating from $m\bfv$ have primitive generators $\bfw_i$, and $m\bfw_i$ still lie on the graph. 
Since $m\geq \dim X$, it follows that $\alpha$ is the largest integer such that 
\[
\alpha< (mh)_P^*(u+mv)-(mh)_P^*(mv).
\]
Hence,  $f$ has vanishing order $-\alpha-b_P-(mh)_P^*(mv)$ at $P$, and thus $f\chi^{u+mv}$ is a local generator for $\CO(K_X+mH)$.
\end{proof}

\begin{remark}
	Two other natural ``combinatorial'' classes of varieties on which one could test Fujita's conjectures are \emph{Mori Dream Spaces} and \emph{projectivised toric vector bundles}. 	
	In \cite{fahrner}, Fahrner describes an algorithm for testing Fujita's freeness conjecture for any fixed Gorenstein Mori Dream Space $X$. Note that in fact, any rational complexity-one $T$-variety is automatically a Mori Dream Space.

	For $\cE$ an equivariant vector bundle on a toric variety $X$, sections of line bundles on $\PP(\cE)$ can also be described combinatorially, see e.g. \cite{parliaments}. This makes projectivised toric vector bundles into an attractive testing ground for Fujita's conjectures. Note that if $\cE$ has rank two, then $\PP(\cE)$ has the natural structure of a complexity-one $T$-variety. In particular, if $X$ is Gorenstein and $\cE$ has rank two, our Theorem \ref{thm:main} implies that Fujita's freeness conjecture holds for $\PP(\cE)$.
\end{remark}

\section{Non-free Ample Divisors}\label{sec:nonfree}
In this section, we give an example showing that on smooth $\bK^*$ surfaces, ample divisors can be ``arbitrarily far away'' from being basepoint free.

From \S\ref{sec:divisors} and \S\ref{sec:adjunction}, we saw that an ample  $T$-invariant divisor $D_h$ on $\tv(\dfan,\marking)$ determines piecewise affine concave functions \[h_P^*:\Box_h\to \QQ\]
whose graphs have integral vertices, such that $\deg h^* >0$ on $\Box_h^\circ$, and if $\deg h^*(v)=0$ at a vertex of $\Box_h$, then $h^*(v)$ is principal. Such a function $h^*$ is called a \emph{divisorial polytope} \cite[\S3]{polarized}. It turns out that we can reverse the above procedure: to any divisorial polytope we can associate a polarized complexity-one $T$-variety $(X,H)$ \cite[Theorem 3.2]{polarized}. We will use this construction to show the following:

\begin{thm}\label{thm:ex}
Let $k\in\NN$. There is a smooth rational projective $\KK^*$-surface $X_k$ with ample divisor $H_k$ such that for all $m\leq k$, $mH_k$ is not globally generated.
\end{thm}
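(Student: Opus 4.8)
The plan is to run the divisorial polytope dictionary backwards. By \cite[Theorem 3.2]{polarized} it suffices to write down a single divisorial polytope — a concave, piecewise-affine datum $\{h_P^*\}$ on an interval $\Box\subset M_\QQ=\QQ$ over $Y=\PP^1$ — whose associated polarized surface $(X_k,H_k)$ is smooth, and then read off the failure of global generation from the combinatorics. The crucial reduction is that on $\PP^1$ a complete linear system is globally generated exactly when it has nonnegative degree; so by Lemma \ref{lemma:bp}, combined with Proposition \ref{prop:bp} (which removes the contraction points from consideration), $mH_k$ fails to be globally generated as soon as there is one non-contraction vertex $\bfv=(v,h_P^*(v))$ with $\deg\lfloor (mh)^*(mv)\rfloor<0$. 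Since $(mh)^*(mv)=m\,h^*(v)$ and the coefficient at $P$ itself is the \emph{integer} $m\,h_P^*(v)$ while the coefficients at the remaining points get rounded down, the whole idea is to engineer one vertex where the true degree $m\deg h^*(v)$ is a small positive number, but rounding down the deliberately nearly-integral values at many auxiliary points pulls the degree below zero for every $m\le k$.

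Concretely I would take $\Box=[0,k+2]$ together with $k+2$ distinct points $P_0,P_1,\dots,P_{k+1}\in\PP^1$. For $1\le i\le k+1$ let $h_{P_i}^*$ be the affine function with $h_{P_i}^*(0)=0$ and $h_{P_i}^*(k+2)=1$ (slope $1/(k+2)$); let $h_{P_0}^*$ be the concave piecewise-affine function through $(0,0)$, $(k+1,-k)$, $(k+2,-(k+1))$; and set all other $h_P^*\equiv 0$. A direct check confirms this is a divisorial polytope: every graph has lattice-point vertices, the degree function is $\deg h^*(u)=u/((k+1)(k+2))$ on $[0,k+1]$ and $1-u/(k+2)$ on $[k+1,k+2]$, hence concave, strictly positive on the interior, and zero (so automatically principal on $\PP^1$) at the two vertices of $\Box$. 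Thus \cite[Theorem 3.2]{polarized} yields a polarized rational $\KK^*$-surface $(X_k,H_k)$ with $H_k$ ample.

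The only non-contraction fixed point is the vertex $\bfv=(k+1,-k)$ of $h_{P_0}^*$: the graphs of the $h_{P_i}^*$ are affine, so their vertices sit over the endpoints of $\Box$, which are contraction points with $\deg h^*=0$. At $\bfv$ one has $h_{P_i}^*(k+1)=(k+1)/(k+2)=1-1/(k+2)$, so for $1\le m\le k$ we get $\lfloor m\,h_{P_i}^*(k+1)\rfloor=m-1$, while the $P_0$-coefficient is the integer $m\,h_{P_0}^*(k+1)=-mk$. Summing the $k+2$ coefficients,
\[
\deg\big\lfloor (mh)^*(m(k+1))\big\rfloor=-mk+(k+1)(m-1)=m-(k+1)\le -1<0 ,
\]
so $|\lfloor (mh)^*(m(k+1))\rfloor|$ is empty and $\bfv$ is a base point of $mH_k$; hence $mH_k$ is not globally generated for any $m\le k$, while $\deg h^*(k+1)=1/(k+2)>0$ keeps $\bfv$ a genuine non-contraction fixed point.

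The step I expect to be most delicate, and the only one left, is smoothness of $X_k$ at all three fixed points. At $\bfv$ it is immediate from \S\ref{sec:local}: the cone $C_\bfv$ is generated by the primitive edge vectors $(-(k+1),k)$ and $(1,-1)$ of the graph of $h_{P_0}^*$, whose determinant is $(k+1)-k=1$, so $U_\bfv$ is smooth — this is exactly why the breakpoint is placed at $k+1$. The remaining work is the local analysis at the two contraction fixed points over the vertices of $\Box$: here one passes to the primal fansy divisor, whose polyhedra $\dfan_P$ have vertices equal to the slopes $-1$ and $-k/(k+1)$ (at $P_0$) and $1/(k+2)$ (at each $P_i$), and checks that the resulting two-dimensional affine toric charts are smooth. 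I expect this to be routine but computation-heavy; should a numeric choice produce a quotient singularity at an end, one absorbs it by adjusting the outer slopes of $h_{P_0}^*$ while keeping the breakpoint at $k+1$ fixed, which leaves the non-freeness computation above untouched.
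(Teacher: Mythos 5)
Your overall strategy --- reverse--engineering a divisorial polytope via \cite[Theorem 3.2]{polarized} and producing a non-contraction vertex where rounding down many nearly-integral coefficients makes $\deg\lfloor m h^*(v)\rfloor$ negative --- is exactly the paper's strategy, and your non-freeness computation $\deg\lfloor m h^*(k+1)\rfloor=m-k-1<0$ is correct. The gap is the step you yourself flag as ``routine but computation-heavy'': your surface is \emph{not} smooth at the contraction fixed points, for any $k\geq 1$. At the vertex $u_0=k+2$ of $\Box$ the affine chart is
\[
\Spec\ \bigoplus_{n\geq 0} H^0\Bigl(\PP^1,\ \CO\bigl(nP_0-\bigl\lceil \tfrac{n}{k+2}\bigr\rceil\textstyle\sum_{i=1}^{k+1}P_i\bigr)\Bigr),
\]
obtained by linearizing the $h_P^*$ at $u_0$ (left-slopes $-1$ at $P_0$ and $\tfrac{1}{k+2}$ at each $P_i$). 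The graded piece in degree $n$ has dimension $\max\bigl(0,\,1+n-(k+1)\lceil n/(k+2)\rceil\bigr)$: it vanishes for $1\leq n\leq k$, has dimension $1$ in degree $k+1$, and dimension $2$ in degree $k+2$. No grading on $\KK[x,y]$ has this Hilbert function: the first nonzero positive degree would force a generator of weight $a=k+1$ with a second generator of weight $b=k+2$, but then the degree-$(k+2)$ piece would be $1$-dimensional (only $y$, since $2a>k+2$), not $2$-dimensional. So this point is a singularity (a similar computation shows the chart at $u_0=0$ is also singular). Since Theorem \ref{thm:ex} is precisely about \emph{smooth} surfaces, this is fatal as stated.

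Your proposed repair --- adjusting the outer slopes of $h_{P_0}^*$ only --- cannot close the gap, because the obstruction does not come from $P_0$: it comes from the $k+1\geq 2$ points $P_i$ whose slopes at the endpoints of $\Box$ are the non-integral number $\tfrac{1}{k+2}$ (forced, since each $h_{P_i}^*$ is affine from $(0,0)$ to $(k+2,1)$). For $k\geq 2$ the local $\QQ$-divisor at either end then has at least three points with non-integral coefficients no matter what integral or fractional slope you assign to $P_0$, which already rules out a smooth (or even cyclic quotient) point. Repairing this forces you to give the $h_{P_i}^*$ extra breakpoints so that their terminal slopes become integral while their value at the critical vertex stays just below an integer --- and this is exactly what the paper's function $g$ accomplishes: it has integral slopes $0$ and $-1$ at the two ends of $\Box$, with a staircase of intermediate slopes $-\tfrac1\lambda,-\tfrac1{\lambda-1},\dots$ whose consecutive edge vectors form unimodular cones, and the right endpoint of $\Box$ is additionally arranged to have $\deg\Psi>0$ so that no contraction fixed point sits there at all. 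So the construction genuinely has to be redone along those lines; the freeness-failure mechanism survives, but the example itself does not.
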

\begin{proof}

For parameters $\alpha,\lambda\in \bN$, consider the following convex functions:

\begin{align*}
f&=
\begin{array}{c}
\begin{tikzpicture}[scale=.4]
\draw (0,0) -- (2,2) -- (20,11) -- (21,11);
\draw[fill] (0,0) circle [radius=.1];
\draw[fill] (2,2) circle [radius=.1];
\draw[fill] (20,11) circle [radius=.1];
\draw[fill] (21,11) circle [radius=.1];
\scriptsize{
\node [below] at (0,0) {$(0,0)$};
\node [above] at (2,2) {$(2,2)$};
\node [below] at (11,6) {slope $\frac{1}{2}$};
\node [above] at (21,11) {$(1+\alpha\lambda+\lambda(\lambda-1),1+\frac{\lambda(\alpha+\lambda-1)}{2})$};
}
\end{tikzpicture}
\end{array}\\
g&=
\begin{array}{c}
\begin{tikzpicture}[scale=.4]
\draw (0,0) -- (1,0) -- (11,-2) -- (15,-3) -- (18,-4);
\draw [dotted] (18,-4) -- (20,-5);
\draw (20,-5) -- (21,-6);
\draw[fill] (0,0) circle [radius=.1];
\draw[fill] (1,0) circle [radius=.1];
\draw[fill] (11,-2) circle [radius=.1];
\draw[fill] (15,-3) circle [radius=.1];
\draw[fill] (18,-4) circle [radius=.1];
\draw[fill] (20,-5) circle [radius=.1];
\draw[fill] (21,-6) circle [radius=.1];
\draw[lightgray] (1,0) -- (11,0) -- (11,-2) -- (15,-2)-- (15,-3)--(18,-3)--(18,-4);
\draw[lightgray] (20,-5)--(21,-5)--(21,-6);
\scriptsize{
\node [above] at (.5,0) {$1$};
\node [above] at (6,0) {$\alpha\lambda$};
\node [left] at (11,-1) {$\alpha$};
\node [above] at (13,-2) {$\lambda-1$};
\node [above] at (16.5,-3) {$\lambda-2$};
\node [above] at (20.5,-5) {$1$};
\node [left] at (15,-2.5) {$1$};
\node [left] at (18,-3.5) {$1$};
\node [right] at (21,-5.5) {$1$};
\node [below] at (0,0) {$(0,0)$};
\node [below] at (21,-6) {$(1+\alpha\lambda+\lambda(\lambda-1),1-\lambda-\alpha)$} ;
}
\end{tikzpicture}
\end{array}
\end{align*}

Then
\[
\Psi=f\otimes P_0+g\otimes(P_1+\ldots+P_\ell)
\]
for distinct points $P_0,\ldots,P_\ell\in \PP^1$ is a divisorial polytope
if
\[
1+\frac{\lambda(\alpha+\lambda-1)}{2}>\ell(\alpha+\lambda-1).
\]
This is certainly satisfied if $2\ell<\lambda$ and $\alpha$ is sufficiently large. 
The corresponding rational $\KK^*$-surface $X$ is smooth by \cite[Proposition 3.4]{polarized}.

However,
\[\deg \lfloor \Psi(2)\rfloor = 2+\ell\lfloor \frac{-1}{\lambda} \rfloor =2-\ell\]
which is negative for $\ell>2$ and $\lambda>1$. Hence, the corresponding ample divisor $H$ is not globally generated by Lemma \ref{lemma:bp}.
More generally, let $\Psi^m$ be the divisorial polytope corresponding to $mH$. It has a vertex at $u=2m$, and we have
\[\deg \lfloor \Psi^m(2m)\rfloor = 2m+\ell\lfloor \frac{-m}{\lambda} \rfloor\]
This is negative if $\ell>2m$ and $\lambda>m$. Thus, 
By choosing $\ell >2k$, $\lambda>2\ell$, and $\alpha$ sufficiently large, we obtain a polarized surface $(X_k,H_k)$ satisfying the hypotheses of the theorem.
Indeed, $\lambda>2\ell$ and $\alpha$ sufficiently large guarantee that the construction yields a divisorial polytope, while $\ell>2k$ will then guarantee the existence of a basepoint of $mH$.
\end{proof}

\bibliographystyle{alpha}
\bibliography{paper}

\end{document}